\newtheorem{theorem}{Theorem}[section]
\newtheorem{lemma}[theorem]{Lemma}
\newtheorem{cor}[theorem]{Corollary}
\newtheorem{prop}[theorem]{Proposition}
\newtheorem{defn}[theorem]{Definition}
\newtheorem{question}{Question}
\newcommand{\deq}{\mathrel{\mathop:}=}
\newcommand{\alg}[1]{\mathbf{#1}}
\newcommand{\var}[1]{\mathsf{#1}}
\newcommand{\sipalg}[1]{\overline{\mathbf{#1}}}
\newcommand{\Bnalg}[1]{\overline{\mathbf{B}}_{#1}}
\def\bigFOand{\mathop{\text{\rm\Large{\&}}}}
\def\FOand{\mathop{\mkern10mu\&\mkern10mu}}
\newcommand\freefootnote[1]{%
  \let\thefootnote\relax%
  \footnotetext{#1}}%
\def\up#1{\mathchoice{\bigl[#1\bigr)}{[#1)}{}{}}
\def\dw#1{\mathchoice{\bigl(#1\bigr]}{(#1]}{}{}}
\tikzstyle{every label}=[label distance=0pt]
\tikzstyle{bdot}[1.5]=[circle,fill,draw,thick,minimum size=#1mm,inner sep=0pt]
\tikzstyle{dot}[1.5]=[circle,draw,thick,minimum size=#1mm,inner sep=0pt]
\tikzstyle{every edge}=[draw=black,thick]
\title[Quasivarieties of p-algebras]{Quasivarieties of p-algebras: some new results} 
\author[Kowalski]{Tomasz Kowalski$^{2,3,4}$}
\author[Słomczyńska]{Katarzyna Słomczyńska$^{1}$}
\address{$^{1}$ Department of Mathematics,
University of National Education Commission, Kraków}
\email{irena.korwin-slomczynska@up.edu.pl}
\address{$^{2}$ Department of Logic, Jagiellonian University}
\email{tomasz.s.kowalski@uj.edu.pl}
\address{$^{3}$ Department of Physical and Mathematical Sciences, La Trobe
  University} 
\email{t.kowalski@latrobe.edu.au}
\address{$^{4}$ School of Historical and Philosophical Inquiry,
  The University of Queensland} 
\email{t.kowalski@uq.edu.au}
\begin{document}

\maketitle 

\begin{abstract}
We investigate quasivarieties of (distributive) p-algebras. We sharpen some
previous results, give a better picture of the subquasivariety lattice,
and prove that quasivarieties generated by free p-algebras belong to a rather
small quasivariety characterised by excluding all but 3 subdirectly irreducibles.
\end{abstract}

\section{Introduction}

This article is a continuation of~\cite{KS24}, where we investigated free
distributive p-algebras. Here we prove some new results on the structure of
the lattice of quasivarieties of these algebras.
As in~\cite{KS24} we assume familiarity with the fundamentals of universal
algebra, say, for definiteness, as given in Part I of~\cite{Ber11}.
Our notation for the most part also follows~\cite{Ber11},  
with trivial variations such as a change of font. Everything over and
above that will be introduced as the need arises.

Recall that distributive p-algebras (henceforth simply \emph{p-algebras}),
are bounded distributive lattices endowed with a unary operation ${}^*$
satisfying the condition
$$
x\wedge y = 0 \iff x\leq y^*
$$
equivalent over bounded distributive lattices (see, e.g.,~\cite{Ber11}, Ch.~4,
p.~108) to the equations
\begin{enumerate}
\item $1^* = 0$,
\item $0^* = 1$,
\item $x\wedge(x\wedge y)^* = x\wedge y^*$.
\end{enumerate}

Let $\var{Pa}$ stand for the variety of all p-algebras.
Subdirectly irreducible algebras in $\var{Pa}$ and the lattice of
subvarieties of $\var{Pa}$ were 
described by Lee in~\cite{Lee70}. 

\begin{theorem}[Lee]\label{thm:si-descr}
Let $\alg{A}\in\var{Pa}$ be subdirectly irreducible. Then $A = B\uplus\{1\}$
and $B$ is the universe of a Boolean algebra $\alg{B} = (B;\wedge,\vee,\neg,0,e)$,
where $e$ is the top element of $\alg{B}$. The order on $A$ extends
the natural order on $B$ by requiring $1>x$ for all $x\in B$. The lattice
operations are extended accordingly, and pseudo-complementation is defined by
$$
x^*\deq\begin{cases}
         \neg x & \text{ if } x\in B\setminus\{0\}\\
         1 & \text{ if } x = 0\\
         0 & \text{ if } x = 1
       \end{cases}     
$$
\end{theorem}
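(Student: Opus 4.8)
The plan rests on two standard objects attached to any p-algebra $\alg A$: the set $D := \{d\in A : d^* = 0\}$ of \emph{dense} elements, and the relation $\Phi$ given by $x\mathrel{\Phi}y\iff x^* = y^*$. I would first collect their properties directly from axioms (1)--(3), the crucial one being that \emph{for dense $d$ one has $(x\wedge d)^* = x^*$}: axiom (3) gives $x\wedge (x\wedge d)^* = x\wedge d^* = 0$, hence $(x\wedge d)^*\le x^*$, while $x^*\le(x\wedge d)^*$ is immediate. It follows that $D$ is a lattice filter, that $\Phi$ is a congruence for which $x\mathrel{\Phi}y$ amounts to ``$x\wedge d = y\wedge d$ for some $d\in D$'' (so $[1]_\Phi = D$), and --- since $(x\vee x^*)^* = x^*\wedge x^{**} = 0$, i.e.\ every $x\vee x^*$ is dense --- that $\alg A/\Phi$ is a Boolean algebra, $\overline{x^*}$ complementing $\bar x$. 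I would also recall that in a distributive lattice the principal congruence is $\theta(a,c) = \{\langle u,v\rangle : u\wedge a = v\wedge a,\ u\vee c = v\vee c\}$ for $a\le c$, and observe that \emph{if $a$ is dense then $\theta(a,c)$ is already ${}^*$-compatible} (as $u\wedge a = v\wedge a$ then forces $u^* = v^*$), so it equals the p-algebra congruence generated by $\langle a,c\rangle$. This last point is the technical engine of the proof.

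Now let $\alg A$ be subdirectly irreducible with monolith $\mu$. If $\Phi = \Delta$, then $D = \{1\}$ (a dense $d<1$ would satisfy $d\mathrel{\Phi}1$), so $x\vee x^* = 1$ for all $x$ and $\alg A$ is Boolean; being subdirectly irreducible it is $\mathbf 2 = \{0\}\uplus\{1\}$, which has the asserted form with $\alg B$ the one-element Boolean algebra and $e = 0$. Otherwise $\Phi > \Delta$, so $\mu\le\Phi$; fix $\langle c,d\rangle\in\mu$ with $c<d$, hence $c^* = d^*$.

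The structural core consists of three steps. \textbf{(i)} $d = 1$. If $d<1$ were dense, $\mu\le\theta(d,1) = \{\langle u,v\rangle : u\wedge d = v\wedge d\}$ would give $c\wedge d = d$, so $d\le c$; if $d$ were neither dense nor $1$, then $d\vee d^*$ is dense and either equals $1$ --- making $d$ (which is $\neq 0,1$) complemented, so $\alg A\cong\dw d\times\dw{d^*}$ is not subdirectly irreducible --- or is dense and $<1$, whence $\mu\le\theta(d\vee d^*,1)$ forces $c = c\wedge(d\vee d^*) = d\wedge(d\vee d^*) = d$. Each possibility contradicts $c<d$, so $d = 1$. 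Set $e := c$; then $e$ is dense, $e<1$, $\mu = \theta(e,1) = \{\langle u,v\rangle : u\wedge e = v\wedge e\}$, and in particular $\langle e,1\rangle\in\mu$. \textbf{(ii)} $e = \max(A\setminus\{1\})$, so $1$ is join-irreducible. For dense $x<1$, $\langle e,1\rangle\in\mu\le\theta(x,1)$ gives $e\wedge x = x$, i.e.\ $x\le e$; for non-dense $x\neq 0$, the element $x\vee x^*$ is dense and $<1$ (otherwise $x$ would be a nontrivial complemented element), hence $x\le x\vee x^*\le e$. \textbf{(iii)} $D = \{e,1\}$. By (ii) every dense element is $1$ or lies below $e$, so it suffices to rule out a dense $b<e$; but then $\theta(b,e) = \{\langle u,v\rangle : u\wedge b = v\wedge b,\ u\vee e = v\vee e\}$ is a p-algebra congruence (${}^*$-compatible since $b$ is dense), nontrivial as $b<e$, yet it fails to contain $\langle e,1\rangle$ because $e\vee e\neq 1\vee e$ --- contradicting $\mu\le\theta(b,e)$.

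It remains to assemble the description. By (ii) and (iii), $B := A\setminus\{1\}$ is the principal ideal $\dw e$, a bounded distributive lattice $(B;\wedge,\vee,0,e)$ over which $1$ sits strictly above every element --- exactly the stated order. For $x\in B\setminus\{0\}$ we have $x^*\neq 1$, so $x^*\in B$; moreover $x\wedge x^* = 0$ and $x\vee x^*$ is a dense element $\neq 1$, so $x\vee x^* = e$. Hence $x^*$ is the complement of $x$ in $(B;\wedge,\vee,0,e)$, which is therefore a Boolean algebra $\alg B = (B;\wedge,\vee,\neg,0,e)$ with $\neg x = x^*$ for $x\neq 0$ and $\neg 0 = e$; reading this back gives the piecewise formula for ${}^*$, agreeing with $\neg$ off $0$ and with $0^* = 1$, $1^* = 0$. (The converse --- that every $\alg B\uplus\{1\}$ of this form is a subdirectly irreducible p-algebra, with monolith $\theta(e,1)$ --- is a routine verification worth recording.) The main obstacle is step (iii), and to a lesser extent (i): these are the points at which subdirect irreducibility genuinely does the work, and getting them right depends entirely on knowing which principal congruences $\theta(a,c)$ are automatically ${}^*$-compatible --- that is, on the dense-element identity $(x\wedge d)^* = x^*$ from the first paragraph.
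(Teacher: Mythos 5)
The paper does not prove this statement at all: it is quoted from Lee's 1970 paper as background, so there is no in-text argument to compare yours against. Judged on its own, your proof is correct and essentially the classical one: Glivenko's congruence $\Phi$ and the dense filter $D$ to reduce to the Boolean case when $\Phi=\Delta$, and otherwise the description of principal congruences in distributive lattices, sharpened by the observation that $\theta(a,c)$ is automatically ${}^*$-compatible when $a$ is dense, to pin down the monolith as $\theta(e,1)$ and force $D=\{e,1\}$ with $e=\max(A\setminus\{1\})$. I checked the key computations: $(x\wedge d)^*=x^*$ for dense $d$; the congruence formula $\{\langle u,v\rangle: u\wedge a=v\wedge a,\ u\vee c=v\vee c\}$; and the three steps (i)--(iii). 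All are sound. Two places are compressed enough to deserve a line each in a written-up version. First, in step (i) the identity $c\wedge(d\vee d^*)=c$ silently uses $c^*=d^*$ (via $c\wedge d^*=c\wedge c^*=0$), which is where $\mu\le\Phi$ actually enters; make that explicit. Second, when you discard a complemented element $0<d<1$ by writing $\alg A\cong\dw{d}\times\dw{d^*}$, you need this to be a decomposition of \emph{p-algebras}, not merely of lattices; this holds because $x^*\wedge d=(x\wedge d)^*\wedge d$ (so the kernels $\{\langle u,v\rangle:u\wedge d=v\wedge d\}$ and $\{\langle u,v\rangle:u\wedge d^*=v\wedge d^*\}$ are ${}^*$-compatible congruences meeting in $\Delta$), but it is not automatic and should be verified. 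With those two details supplied, the argument is a complete and correct proof of Lee's theorem.
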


Following~\cite{Ber11} we write $\sipalg{B}$ for the subdirectly
irreducible p-algebra whose Boolean part is $\alg{B}$. For finite algebras we
write $\Bnalg{n}$, where $\alg{B}_n$ is the finite Boolean algebra
with $n$ atoms. Thus, $\Bnalg{0}$ is the two-element Boolean
algebra, which we will also denote by $\alg{2}$ where convenient.

\begin{theorem}[Lee]\label{thm:var-descr}
The lattice of subvarieties of $\var{Pa}$ is a chain of order type $\omega+1$,
consisting of varieties
$$
\var{Pa}_{-1}\subsetneq \var{Pa}_0 \subsetneq \var{Pa}_1 \subsetneq \dots
\subsetneq \var{Pa}
$$
where $\var{Pa}_{-1}$ is the trivial variety, and
$\var{Pa}_k = V(\Bnalg{k})$ for $k\in\mathbb{N}$.
\end{theorem}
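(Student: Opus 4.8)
The plan is to reduce the classification of subvarieties to a census of the subdirectly irreducible algebras each one contains, using Birkhoff's subdirect representation theorem together with Theorem~\ref{thm:si-descr}, and to invoke congruence distributivity of $\var{Pa}$ (it is a variety of bounded distributive lattices with one extra unary operation) at the point where Jónsson's Lemma is needed. Two structural observations about the algebras $\sipalg{B}$ will do most of the work. First, for $m\le n$ the Boolean algebra $\alg{B}_m$ embeds into $\alg{B}_n$ by an embedding preserving the top element of the Boolean part — group the $n$ atoms of $\alg{B}_n$ into $m$ nonempty blocks and take the subalgebra generated by the $m$ block-joins — and since a Boolean subalgebra sharing the top element is closed under complementation, the description of ${}^*$ in Theorem~\ref{thm:si-descr} shows this lifts to an embedding $\Bnalg{m}\hookrightarrow\Bnalg{n}$; hence $\var{Pa}_m=V(\Bnalg{m})\subseteq V(\Bnalg{n})=\var{Pa}_n$. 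Second, if $\alg{B}$ contains $n+1$ pairwise disjoint nonzero elements (equivalently, $\alg{B}\not\cong\alg{B}_m$ for any $m\le n$), then $\alg{B}$ has a Boolean subalgebra isomorphic to $\alg{B}_{n+1}$ sharing its top element — take $n+1$ pairwise disjoint nonzero elements and absorb into one of them the complement of their join — so $\Bnalg{n+1}\hookrightarrow\sipalg{B}$. I will also record the complementary finiteness fact: every finitely generated subalgebra of $\sipalg{B}$ is again of the form $\Bnalg{m}$ for some finite $m$, because a subalgebra generated by $x_1,\dots,x_k$ contains $0$ and $1$, contains $e=x_i\vee x_i^*$ as soon as some $x_i\notin\{0,1\}$, and meets $B$ in the (necessarily finite) Boolean subalgebra of $\alg{B}$ generated by the $x_i$ together with $e$.

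Granting these, the inclusions $\var{Pa}_{-1}\subseteq\var{Pa}_0\subseteq\var{Pa}_1\subseteq\cdots\subseteq\var{Pa}$ are immediate (using the first observation, and the trivial variety being the least subvariety). For strictness I show $\Bnalg{n+1}\notin\var{Pa}_n$. A short case analysis shows every nonidentity congruence on $\sipalg{B}$ identifies $e$ with $1$, so each $\sipalg{B}$ is subdirectly irreducible; by Jónsson's Lemma the subdirectly irreducibles of $\var{Pa}_n=V(\Bnalg{n})$ therefore lie in $\mathbb{H}\mathbb{S}\mathbb{P}_U(\Bnalg{n})=\mathbb{H}\mathbb{S}(\Bnalg{n})$, whose members all have cardinality at most $|\Bnalg{n}|=2^n+1<2^{n+1}+1=|\Bnalg{n+1}|$. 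The same count gives $\var{Pa}_n\neq\var{Pa}$, while $\var{Pa}_{-1}\subsetneq\var{Pa}_0$ since $\alg{2}$ is nontrivial. (One could instead separate $\Bnalg{n}$ from $\Bnalg{n+1}$ by writing out Lee's equations axiomatising $\var{Pa}_n$ over $\var{Pa}$, but the cardinality argument is shorter.)

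Finally I show every subvariety $V$ appears in the list. If $V$ is trivial, $V=\var{Pa}_{-1}$. Otherwise $\alg{2}=\Bnalg{0}\in V$, and the set $S=\{m\in\mathbb{N}:\Bnalg{m}\in V\}$ is nonempty and, by the first observation, downward closed, so $S=\mathbb{N}$ or $S=\{0,\dots,n\}$. If $S=\mathbb{N}$ then $V\supseteq V(\{\Bnalg{m}:m\in\mathbb{N}\})$, and this already equals $\var{Pa}$: by Birkhoff and Theorem~\ref{thm:si-descr} every p-algebra is a subdirect product of algebras $\sipalg{B}$, and each $\sipalg{B}$ lies in $V(\{\Bnalg{m}:m\in\mathbb{N}\})$ because all of its finitely generated subalgebras do (by the finiteness fact) and any equation failing in an algebra already fails in a finitely generated subalgebra; hence $V=\var{Pa}$. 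If $S=\{0,\dots,n\}$, then $\Bnalg{n}\in V$ gives $\var{Pa}_n\subseteq V$, while for the reverse inclusion write an arbitrary $\alg{A}\in V$ as a subdirect product of subdirectly irreducibles in $V$: each factor is some $\sipalg{B}$, and if that $\alg{B}$ had $n+1$ pairwise disjoint nonzero elements then by the second observation $\Bnalg{n+1}$ would embed into the factor, forcing $\Bnalg{n+1}\in V$ against $n+1\notin S$; so each factor is a $\Bnalg{m}$ with $m\le n$, hence lies in $\var{Pa}_n$ by the first observation, and $\alg{A}\in\var{Pa}_n$. Thus $V=\var{Pa}_n$. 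Together with the strictness above this shows the subvariety lattice is the chain $\var{Pa}_{-1}\subsetneq\var{Pa}_0\subsetneq\var{Pa}_1\subsetneq\cdots\subsetneq\var{Pa}$, of order type $\omega+1$.

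I expect the main obstacle to be getting the two subalgebra lemmas exactly right: in both directions the Boolean embedding must preserve the top element of the Boolean part, since it is that element, not the top $1$ of the p-algebra, that is tied to pseudocomplementation via $x^*=\neg x$. Once this is pinned down — and once one observes that the $\sipalg{B}$ are in fact precisely the subdirectly irreducibles — the rest is routine manipulation with $\mathbb{H},\mathbb{S},\mathbb{P}$, Birkhoff's theorem and Jónsson's Lemma.
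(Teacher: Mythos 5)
The paper offers no proof of this statement: it is quoted as a classical result of Lee~\cite{Lee70}, so there is nothing internal to compare your argument against. Judged on its own, your reconstruction is correct and is essentially the standard proof. The two embedding observations are right, and the point you flag yourself --- that the Boolean embeddings must preserve the top $e$ of the Boolean part, since $x^*=\neg x$ is complementation relative to $e$ rather than to $1$ --- is exactly the detail that makes them lift to p-algebra embeddings; the ``absorb the complement of the join'' trick handles it correctly in the second observation. The finiteness fact (every finitely generated subalgebra of $\sipalg{B}$ is some $\Bnalg{m}$) is what lets you conclude $V(\{\Bnalg{m}:m\in\mathbb{N}\})=\var{Pa}$ in the $S=\mathbb{N}$ case, and the appeal to J\'onsson's Lemma (legitimate, since the distributive lattice reduct gives congruence distributivity) together with the cardinality count correctly yields $\Bnalg{n+1}\notin V(\Bnalg{n})$ and hence strictness of the chain. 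Two small points worth making explicit if you write this up: you do use the converse half of Theorem~\ref{thm:si-descr} (that every $\sipalg{B}$ \emph{is} subdirectly irreducible, via the congruence analysis you sketch), which the paper's statement of that theorem only gives in one direction; and in the $S=\{0,\dots,n\}$ case you should record the equivalence ``$\alg{B}$ has no $n+1$ pairwise disjoint nonzero elements iff $\alg{B}\cong\alg{B}_m$ for some $m\le n$'' (take a maximal disjoint family and check its members are atoms joining to $e$), which is the step that forces every subdirect factor to be a $\Bnalg{m}$ with $m\le n$. Neither is a gap, just a line each to fill in.
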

In particular $\var{Pa}_0$ is the variety $\var{BA}$ of Boolean algebras.
The variety $\var{Pa}_1$, of \emph{Stone algebras},
was studied, e.g., in~\cite{BH70}, \cite{BG71} and~\cite{Pri75}.
Identities defining $\var{Pa}_k$ were given in~\cite{Lee70}. We give different
ones below, referring the reader to~\cite{KS24} for a proof. 

\begin{lemma}\label{lem:width}
For any $m>0$, the variety $\var{Pa}_m$ is axiomatised by the single
identity
\begin{equation*}
\tag{$\mathbf{ib}_{m}$}\bigvee_{i=1}^{m+1} (x_i \wedge \bigwedge_{j\neq i} x_j^*)^* = 1. 
\end{equation*}
\end{lemma}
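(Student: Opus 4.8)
The plan is to identify the subvariety $\var{V}$ of $\var{Pa}$ axiomatised over $\var{Pa}$ by $(\mathbf{ib}_{m})$ by locating it in Lee's chain. By Theorem~\ref{thm:var-descr}, $\var{V}$ equals $\var{Pa}_k$ for some $k$ or equals $\var{Pa}$, so it is enough to prove that (i) $\Bnalg{m}\models(\mathbf{ib}_{m})$ and (ii) $\Bnalg{m+1}\not\models(\mathbf{ib}_{m})$. From (i) we get $\var{Pa}_m=V(\Bnalg{m})\subseteq\var{V}$; from (ii), since $\Bnalg{m+1}\in\var{Pa}_{m+1}$, we get $\var{Pa}_{m+1}\not\subseteq\var{V}$; and in the chain the only subvariety that contains $\var{Pa}_m$ but not $\var{Pa}_{m+1}$ is $\var{Pa}_m$ itself, so $\var{V}=\var{Pa}_m$.

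Fact (ii) is immediate: substitute the $m+1$ atoms $p_1,\dots,p_{m+1}$ of $\Bnalg{m+1}$ for $x_1,\dots,x_{m+1}$. By the description of ${}^{*}$ in Theorem~\ref{thm:si-descr}, the conjunct $p_i\wedge\bigwedge_{j\neq i}p_j^{*}$ equals $p_i\wedge\bigwedge_{j\neq i}\neg p_j=p_i\neq 0$, so its star is $\neg p_i\le e<1$, and the whole join is $\le e<1$.

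For fact (i) I would evaluate the left-hand side of $(\mathbf{ib}_{m})$ at an arbitrary tuple $a_1,\dots,a_{m+1}$ from $\Bnalg{m}$, following the three clauses of Theorem~\ref{thm:si-descr}. If some $a_i=1$, then every conjunct of index $\neq i$ has the factor $a_i^{*}=0$ and so vanishes, its star is $1$, and the join is $1$; if some $a_i=0$, the $i$-th conjunct vanishes and again the join is $1$. The only real case is $a_1,\dots,a_{m+1}\in B\setminus\{0\}$: then each conjunct $t_i\deq a_i\wedge\bigwedge_{j\neq i}a_j^{*}$ lies in $B$ and equals $a_i\wedge\neg\bigvee_{j\neq i}a_j$ computed in $\alg{B}_m$, the join $\bigvee_i t_i^{*}$ equals $1$ when some $t_i=0$ and otherwise equals $\bigvee_i\neg t_i\le e<1$, so $(\mathbf{ib}_{m})$ holds at this tuple exactly when some $a_i\le\bigvee_{j\neq i}a_j$. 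That this always happens is a pigeonhole argument: if no such $i$ existed, then choosing, for each $i$, an atom $q_i\le a_i\wedge\neg\bigvee_{j\neq i}a_j$ would yield $m+1$ pairwise distinct atoms of $\alg{B}_m$, which has only $m$. Hence $\Bnalg{m}\models(\mathbf{ib}_{m})$, and since identities pass to $\mathrm{HSP}$, so does $\var{Pa}_m$.

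The only delicate point is the bookkeeping in the case analysis for (i): one must keep in mind that on a subdirectly irreducible algebra $a^{*}=1$ (not $\neg a$) when $a=0$, and that meets with the adjoined top $1$ are harmless. Beyond that I expect no obstacle — the content is the pigeonhole step together with Theorem~\ref{thm:var-descr}. It is worth noting why $m>0$ is required: for $m=0$ the inner meet $\bigwedge_{j\neq 1}$ is empty and $(\mathbf{ib}_0)$ degenerates to $x^{*}=1$, which axiomatises the trivial variety, not $\var{BA}=\var{Pa}_0$.
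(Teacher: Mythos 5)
Your proof is correct. Note that the paper does not actually prove Lemma~\ref{lem:width}; it defers to~\cite{KS24}, so there is no in-paper argument to compare against, and your write-up has to stand on its own --- which it does. The structural move is sound: by Theorem~\ref{thm:var-descr} the subvariety of $\var{Pa}$ cut out by $(\mathbf{ib}_m)$ occupies a unique position in Lee's chain, so the whole lemma reduces to the two finite verifications $\Bnalg{m}\models(\mathbf{ib}_m)$ and $\Bnalg{m+1}\not\models(\mathbf{ib}_m)$, and both are carried out correctly using the description of ${}^*$ in Theorem~\ref{thm:si-descr}. The only step left implicit is why the atoms $q_i$ in the pigeonhole argument are pairwise distinct: if $q_i=q_{i'}$ with $i\neq i'$, then $q_i\leq a_i$ and $q_i\leq\neg\bigvee_{j\neq i'}a_j\leq\neg a_i$, so $q_i=0$, a contradiction --- worth one explicit line. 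Your closing observation that $(\mathbf{ib}_0)$ degenerates to $x^*=1$ and hence axiomatises the trivial variety correctly accounts for the hypothesis $m>0$.
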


The lattice of quasivarieties of p-algebras was studied as well.
Wroński~\cite{Wro76} and independently Adams~\cite{Ada76} proved that it
is uncountable, and Gr\"atzer \emph{et al.}~\cite{GLQ80}
improved this result, showing that the lattice of
subquasivarieties of $\var{Pa}_3$ is uncountable.
Adams and Dziobiak~\cite{AD94}, using results of 
Dziobiak~\cite{Dzi85}, show that the lattice
$L^q(\var{Pa})$ of subquasivarieties of $\var{Pa}$ is Q-universal, that is,
for any quasivariety of algebras of finite type,
its subquasivariety lattice is a homomorphic image
of a sublattice of $L^q(\var{Pa})$.

\subsection{Unnested diagrams}
We briefly describe a tool that has been widely used in the theory
of quasivarieties, more or less as folklore. A good example of such use is
Hyndman and Nation~\cite{HN18}, the paragraphs preceding Theorem~2.1. 
We will state an explicit definition, but first recall that a
formula in some first-order signature $\tau$ is \emph{unnested}, if it is
of one of the following forms:
\begin{itemize}
\item $x = y$ for variables $x$ and $y$,
\item $c = y$ for a constant $c$ and a variable $y$,
\item $f(x_1,\dots,x_n) = y$ for variables $x_i,y$ and an $n$-ary
  function symbol $f$,
\item $R(x_1,\dots,x_n)$ for variables $x_i$ and an $n$-ary
  relation symbol $R$.
\end{itemize}

\begin{defn}\label{df:diagram}
Let $\alg{A}$ be an algebra of signature $\tau$. For any $a\in A$ let $x_a$
be a distinct variable. We put
\begin{itemize}
\item $\Delta^+(\alg{A}) = \bigl\{f(x_{a_1}\dots, x_{a_n}) = x_b:
f^{\alg{A}}(a_1,\dots,a_n) = b, (a_1,\dots, a_n)\in A^n,\\ f \text{ an  $n$-ary
  function in } \tau\bigr\}\cup
  \bigl\{x_c = c^{\alg{A}}: c\text{ a constant in } \tau\bigr\}$ 
\item $\Delta^-(\alg{A}) = \bigl\{\neg(f(x_{a_1}\dots, x_{a_n}) = x_b):
f^{\alg{A}}(a_1,\dots,a_n) \neq b, (a_1,\dots, a_n)\in~A^n,\\ f \text{ an  $n$-ary
  function in } \tau\bigr\}\cup
\bigl\{\neg(x_a = c): a\neq c^{\alg{A}}, c\text{ a constant in } \tau\bigr\}$
\item $\Delta(\alg{A}) = \Delta^+(\alg{A})\cup\Delta^-(\alg{A})$
\end{itemize}
If $\tau$ and $\alg{A}$ are both finite, then $\Delta^+(\alg{A})$ and
$\Delta^-(\alg{A})$ are both finite; in this case, we define the formulas
$D^+(\alg{A}) = \bigFOand \Delta^+(\alg{A})$,
$D^-(\alg{A}) = \bigFOand\Delta^-(\alg{A})$ and 
$D(\alg{A}) = D^+(\alg{A})\FOand D^-(\alg{A})$.
\end{defn}  

We will call $\Delta(\alg{A})$ (and $D(\alg{A})$ if it exists) the
\emph{unnested diagram of} $\alg{A}$. Similarly, we will call
$\Delta^+(\alg{A})$ (and $D^+(\alg{A})$) the \emph{positive unnested diagram of}
$\alg{A}$. 
The following characterisation can be easily proved, for instance by modifying the
proof of Lemma~1.4.2, in Hodges~\cite{Hod93}.

\begin{lemma}[Diagram Lemma]\label{prop:diag-lemma}
Let $\alg{A}$ and $\alg{B}$ be algebras in signature $\tau$.
\begin{enumerate}  
\item $\alg{B}\models \Delta^+(\alg{A})(\overline{b})$ for some (possibly infinite)
tuple $\overline{b}$ of elements of $B$ if and only if a homomorphic image of $\alg{A}$
is isomorphic to a subalgebra of $\alg{B}$.
\item $\alg{B}\models \Delta(\alg{A})(\overline{b})$ for some (possibly infinite)
tuple $\overline{b}$ of elements of $B$ if and only if $\alg{A}$
is isomorphic to a subalgebra of $\alg{B}$.
\end{enumerate}  
If $\alg{A}$ and $\tau$ are finite, then we can replace $\Delta$ above by $D$,
without loss of generality.
\end{lemma}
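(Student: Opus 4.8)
The plan is to convert the model-theoretic statement into one about maps $A\to B$, in the style of the classical diagram lemma. Given a (possibly infinite) tuple $\overline{b}=(b_a)_{a\in A}$ of elements of $\alg{B}$, I would associate the map $h\colon A\to B$ with $h(a)=b_a$; every such tuple arises this way from a unique $h$, so the task reduces to identifying, for $\Delta^+(\alg{A})$ and for $\Delta(\alg{A})$ separately, exactly which maps $h$ satisfy them.

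First I would show that $\alg{B}\models\Delta^+(\alg{A})(\overline{b})$ holds precisely when $h$ is a homomorphism $\alg{A}\to\alg{B}$. Unwinding the definitions, the instance $f(x_{a_1},\dots,x_{a_n})=x_b\in\Delta^+(\alg{A})$, where $b=f^{\alg{A}}(a_1,\dots,a_n)$, translates into $f^{\alg{B}}(h(a_1),\dots,h(a_n))=h(b)$, and the clause $x_c=c$ translates into $h(c^{\alg{A}})=c^{\alg{B}}$; so satisfaction of $\Delta^+(\alg{A})$ says exactly that $h$ commutes with the basic operations and constants, and a one-line induction on term complexity upgrades this to $h$ preserving all terms, i.e.\ to $h$ being a homomorphism. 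Statement (1) then follows: a homomorphism $h$ yields $h(\alg{A})$, which is a subalgebra of $\alg{B}$ and a homomorphic image of $\alg{A}$; conversely, a homomorphic image $\alg{A}/\theta$ embedded in $\alg{B}$ via $\iota$ equals $h(\alg{A})$ for $h$ the composite of the quotient map with $\iota$.

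Next I would bring in the negative part. If in addition $\alg{B}\models\Delta^-(\alg{A})(\overline{b})$, then for distinct $a,a'\in A$ the formula $\neg(x_a\wedge x_a=x_{a'})$ lies in $\Delta^-(\alg{A})$ — since $a\wedge a=a\neq a'$ — and its satisfaction gives $b_a=b_a\wedge b_a\neq b_{a'}$, so $h$ is injective; the remaining negated instances, among them the clauses $\neg(x_a=c)$, then hold automatically for an injective homomorphism. Conversely, if $h$ is an embedding then every negated instance of $\Delta^-(\alg{A})$ is true under $\overline{b}$. Hence $\alg{B}\models\Delta(\alg{A})(\overline{b})$ iff $h$ is an injective homomorphism, i.e.\ iff $\alg{A}$ is isomorphic to the subalgebra $h(\alg{A})$ of $\alg{B}$, which is (2). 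For the last clause: when $\alg{A}$ and $\tau$ are both finite, the sets $\Delta^+(\alg{A})$, $\Delta^-(\alg{A})$, $\Delta(\alg{A})$ are finite, and satisfying a finite set of formulas is the same as satisfying its conjunction, so $\Delta$ may be replaced by $D$ (and $\Delta^+$ by $D^+$) throughout.

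I do not anticipate a real obstacle: this is essentially Lemma~1.4.2 of Hodges~\cite{Hod93} with two cosmetic modifications — keeping a separate positive diagram (which records homomorphisms rather than embeddings) and restricting to unnested instances (so the diagram refers only to the basic operations of $\tau$). The only points that deserve a moment's care are precisely these: that the unnested positive diagram still determines a genuine homomorphism, which is the induction on terms above, and that $\Delta^-(\alg{A})$ is strong enough to separate any two distinct elements of $A$, which is clear from the inequations between distinct element-variables and, in the present signature, also from idempotency of $\wedge$. Allowing $\overline{b}$ to be infinite changes nothing beyond working with assignments on the infinite variable set $\{x_a:a\in A\}$.
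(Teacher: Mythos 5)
Your proof is correct and follows exactly the route the paper intends: the paper offers no proof of its own, merely deferring to Hodges's Lemma~1.4.2, and your argument is the standard modification of that proof (tuples indexed by $A$ correspond to maps $h\colon A\to B$, satisfaction of the positive unnested diagram characterises homomorphisms, and the negative part forces injectivity, so that (1) and (2) follow by factoring through the image). Your one point of extra care is well placed: since $\Delta^-(\alg{A})$ as literally defined contains no clauses $\neg(x_a = x_{a'})$, injectivity must indeed be extracted from negated instances of basic operations such as $\neg(x_a\wedge x_a = x_{a'})$ together with idempotency of $\wedge$ in the signature at hand, which is precisely the detail the paper glosses over.
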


\begin{lemma}\label{lem:fin-algs-in-qvars}
Let $\mathcal{K}$ be a class of
algebras of a finite signature $\tau$. Let $\alg{A}$ be a finite algebra
of signature $\tau$. Then the following are equivalent:
\begin{enumerate}
\item $\alg{A}\in Q(\mathcal{K})$.
\item $\alg{A}\in ISP_{\textit{fin}}(\mathcal{K})$.  
\end{enumerate}
\end{lemma}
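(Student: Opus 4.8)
The plan is to prove the substantive implication $(1)\Rightarrow(2)$; the converse is immediate, since $ISP_{\textit{fin}}(\mathcal{K})\subseteq ISP(\mathcal{K})\subseteq Q(\mathcal{K})$. For the forward direction I would start from the classical description $Q(\mathcal{K})=ISPP_{U}(\mathcal{K})$, which exhibits $Q(\mathcal{K})$ as precisely the class of algebras validating every quasi-identity of $\mathcal{K}$ (quasi-identities are universal Horn sentences, hence preserved by $I$, $S$, $P$ and $P_{U}$). Consequently, if $\alg{A}\in Q(\mathcal{K})$ and $\alg{A}$ \emph{fails} a quasi-identity $\sigma$, then some member of $\mathcal{K}$ fails $\sigma$ as well.

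Now let $\alg{A}$ be finite. Since $\tau$ is finite, the formulas $D^{+}(\alg{A})$ and $D^{-}(\alg{A})$ of Definition~\ref{df:diagram} are finite conjunctions of, respectively, atomic and negated-atomic unnested formulas in the variables $\{x_{a}:a\in A\}$. For each conjunct $\neg\varphi$ of $D^{-}(\alg{A})$ consider the quasi-identity $\sigma_{\varphi}\deq D^{+}(\alg{A})\Rightarrow\varphi$. Under the assignment $x_{a}\mapsto a$ the algebra $\alg{A}$ satisfies $D^{+}(\alg{A})$ but not $\varphi$, so $\alg{A}\not\models\sigma_{\varphi}$; hence there are $\alg{C}_{\varphi}\in\mathcal{K}$ and a tuple $\overline{c}^{\,\varphi}=(c^{\varphi}_{a})_{a\in A}$ with $\alg{C}_{\varphi}\models D^{+}(\alg{A})(\overline{c}^{\,\varphi})$ and $\alg{C}_{\varphi}\not\models\varphi(\overline{c}^{\,\varphi})$. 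Put $\alg{C}\deq\prod_{\varphi}\alg{C}_{\varphi}$, a \emph{finite} product since $\alg{A}$ and $\tau$ are finite, and let $d_{a}\in C$ be the element whose $\varphi$-th coordinate is $c^{\varphi}_{a}$. Then $\alg{C}\models D^{+}(\alg{A})(\overline{d})$, since equations hold in a product coordinatewise and each factor satisfies every conjunct of $D^{+}(\alg{A})$ under its tuple; and $\alg{C}\models D^{-}(\alg{A})(\overline{d})$, since for each conjunct $\neg\varphi$ the equation $\varphi$ already fails in the $\varphi$-th factor, hence in $\alg{C}$. Thus $\alg{C}\models D(\alg{A})(\overline{d})$, and Lemma~\ref{prop:diag-lemma}(2) in its finite form yields an embedding of $\alg{A}$ into $\alg{C}\in P_{\textit{fin}}(\mathcal{K})$, so $\alg{A}\in ISP_{\textit{fin}}(\mathcal{K})$. (If $D^{-}(\alg{A})$ is empty, for instance when $|A|=1$, the product is empty and we take $\alg{C}$ to be the trivial algebra.)

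I expect the one genuine point to be conceptual rather than computational: the representation of $\alg{A}$ delivered by $Q(\mathcal{K})=ISPP_{U}(\mathcal{K})$ is \emph{a priori} an infinite product of ultraproducts, and the content of the lemma is that for finite $\alg{A}$ it can be replaced by a finite product of members of $\mathcal{K}$ themselves. The diagram device is exactly what achieves this: breaking the finite negative diagram into one quasi-identity per negated conjunct, each of those is already refuted inside a single algebra of $\mathcal{K}$, so finitely many factors suffice to separate every pair of elements that $\alg{A}$ keeps apart. I would also verify at the outset that $\Delta^{-}(\alg{A})$ is understood to include the inequations $x_{a}\neq x_{b}$ for $a\neq b$ (needed for Lemma~\ref{prop:diag-lemma} to produce an \emph{injective} homomorphism), treating those conjuncts on the same footing as the function- and constant-inequations.
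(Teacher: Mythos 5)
Your proof is correct, but it takes a genuinely different route from the paper's. The paper starts from the representation $Q(\mathcal{K})=ISPP_U(\mathcal{K})$, so $\alg{A}\leq\prod_{i\in I}\alg{B}_i$ with each $\alg{B}_i$ an ultraproduct of members of $\mathcal{K}$; finiteness of $\alg{A}$ cuts $I$ down to a finite $I_0$, and then for each $i$ the existential sentence $\exists\overline{x}:D(\mathbf{D}_i)$ (where $\mathbf{D}_i$ is the $i$-th projection of $\alg{A}$) is transferred from the ultraproduct $\alg{B}_i$ back to one of its factors via {\L}o{\'s}'s theorem, replacing each ultraproduct by a single member of $\mathcal{K}$. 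You instead invoke the syntactic half of Mal'cev's characterisation ($Q(\mathcal{K})$ as the models of the quasi-identities of $\mathcal{K}$), manufacture one quasi-identity $D^+(\alg{A})\Rightarrow\varphi$ per negated conjunct of the diagram, refute each in a single member of $\mathcal{K}$, and assemble a finite product directly. Both arguments hinge on the unnested diagram, but the decompositions differ: the paper decomposes by the coordinates of a product representation, you decompose by the conjuncts of $D^-(\alg{A})$. Your version avoids any explicit use of ultraproducts (they are hidden inside the black-boxed Mal'cev theorem) and gives an explicit bound on the number of factors needed, namely $|\Delta^-(\alg{A})|$; the paper's version reuses the same $\exists$-diagram machinery it needs elsewhere. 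Your closing remark is also well taken: as literally written, the paper's $\Delta^-(\alg{A})$ omits the inequations $\neg(x_a=x_b)$ for distinct $a,b\in A$, without which part (2) of the Diagram Lemma can fail for impoverished signatures (it is harmless for p-algebras, where the clause $\neg(x_a\wedge x_b = x_a)$ or $\neg(x_a\wedge x_b=x_b)$ already forces injectivity); adding them, as you do, costs nothing in your argument since each such conjunct just contributes one more quasi-identity and one more factor.
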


\begin{proof}
The direction from (2) to (1) is obvious. For the converse, let
$\alg{A}$ be a finite algebra of signature $\tau$.   
Then $\alg{A} \leq \prod_{i\in I} \alg{B}_i$, where 
$\alg{B}_i = \prod_{j\in J}\mathbf{C}_{i_j}/U$ is an ultraproduct
of algebras $\mathbf{C}_{i_j}$ belonging to
$\mathcal{K}$. Since $\alg{A}$ is
finite, finitely many coordinates suffice to separate points of $A$, and so
$\alg{A}\leq \prod_{i\in I_0} \alg{B}_i$ for some finite $I_0\subseteq I$.

Let $\mathbf{D}_i$ be the projection of $\alg{A}$ on the $i$-th coordinate,
so that $\mathbf{D}_i\leq \alg{B}_i$. Then $\mathbf{D}_i$ is finite for
each $i\in I_0$. Take the diagram $D(\mathbf{D}_i)$
(cf. Definition~\ref{df:diagram}) and
consider the sentence $\exists\overline{x}:D(\mathbf{D}_i)$ obtained from
$D(\mathbf{D}_i)$ by quantifying existentially over all the variables.
Then, by Lemma~\ref{prop:diag-lemma},
we have that an algebra $\mathbf{E}$ satisfies
$\exists\overline{x}:D(\mathbf{D}_i)$ if and only if $\mathbf{D}_i\leq \mathbf{E}$.
It follows that $\alg{B}_i\models \exists\overline{x}:D(\mathbf{D}_i)$ for
each $i\in I_0$. 
But since $\alg{B}_i = \prod_{j\in J}\mathbf{C}_{i_j}/U$, we have that
$\{j\in J: \mathbf{C}_{i_j}\models \exists\overline{x}:D(\mathbf{D}_i)\}\in U$.
Pick any $j\in J$ for which
$\mathbf{C}_{i_j}\models \exists\overline{x}:D(\mathbf{D}_i)$.
Then  $\mathbf{D}_{i}\leq \mathbf{C}_{i_j}$,
and since the algebras $\mathbf{D}_{i_j}$ are projections of $\alg{A}$ we get
that $\alg{A}\leq \prod_{i\in I_0}\mathbf{C}_{i_j}$. As
$\mathbf{C}_{i_j}\in \mathcal{K}$, we obtain that
$\alg{A}\in ISP(\mathcal{K})$.
\end{proof}  

To see that the finite signature assumption cannot be relaxed, consider the class
$\mathcal{K} = \{Q(\alg{A}_i): i\in\mathbb{N}\}$ 
where $\alg{A}_i = (\{0,1\}; \wedge, \vee, (f_j:j\in\mathbb{N}),0,1)$
where $\wedge$ and $\vee$ are lattice operations on $\{0,1\}$ with respect to
the natural ordering, and each $f_j$ is a unary operation defined by
$$
f_j(x) = \begin{cases}
           x & \text{ if } j\neq i,\\
           0 & \text{ if } j = i.
\end{cases}               
$$
Let $\mathbf{D}$ be a two-element bounded lattice with additional unary operations
$(f_j:j\in\mathbb{N})$ which all are identity maps. It is easy to see that
$\mathbf{D} \in ISP_U(\mathcal{K})$, but
$\mathbf{D} \notin ISP(\mathcal{K})$, so in particular
$\mathbf{D} \notin ISP_\textit{fin}(\mathcal{K})$. 

\subsection{Duality for finite p-algebras}
Priestley~\cite{Pri75} gave a topological duality for p-algebras.
As usual, for finite objects the topology is discrete, so for our purposes
the non-topological part will suffice. Objects dual to finite p-algebras are
finite posets, and morphisms are order-preserving maps 
$f\colon X\to Y$ satisfying the condition $f(\max\up{x}) = \max\up{f(x)}$
for every $x\in X$. We call them \emph{pp-morphisms}, by analogy
to, and to avoid confusion with, p-morphisms familiar from intuitionistic
and modal logics. 

For a p-algebra $\alg{A}$, we let $\mathcal{F}_p(\alg{A})$ stand for the
poset of prime filters of $\alg{A}$ ordered by inclusion. If $\alg{A}$ is
finite, $\mathcal{F}_p(\alg{A})$ is isomorphic to the poset of join-irreducible
elements of $A$ ordered by the converse of the order inherited from $\alg{A}$.
We write $\delta(\alg{A})$ for that poset, and for any homomorphism
$h\colon\alg{A}\to\alg{B}$ we define 
$\delta(h)\colon \delta(\alg{B})\to \delta(\alg{A})$ by $\delta(h)(F) =
h^{-1}(F)$ for any $F\in\mathcal{F}_p(\alg{B})$.
So defined $\delta(h)$ is a pp-morphism.

Conversely, for a finite poset $X$, we let $\mathrm{Up}(X)$
stand for the set of upsets of $X$. The algebra 
$\varepsilon(X) = (\mathrm{Up}(X); \cup,\cap,{}^*, \emptyset,X)$,
with $U^* \deq X\setminus \dw{U}$ is a p-algebra.
For a pp-morphism $f\colon X\to Y$, we define
$\varepsilon(f)\colon \varepsilon(Y)\to \varepsilon(X)$ by
$\varepsilon(f)(U) \deq \up{f^{-1}(U)}$ for any $U\in\mathrm{Up}(\mathbb{Y})$.
So defined $\varepsilon(f)$ is a homomorphism of p-algebras.

As expected, for a finite p-algebra $\alg{A}$ we have $\alg{A} \cong
\varepsilon(\delta(\alg{A}))$, and for a finite poset $X$ we have
$X \cong \delta(\varepsilon(X))$. The usual dual correspondences between
surjective and injective maps hold, as well as the correspondence between
finite direct products and finite disjoint unions. 

By Lemma~\ref{lem:fin-algs-in-qvars} and duality, we immediately obtain
the next result, which will be used later on without further notice.
The reader can also find it in~\cite{GLQ80}, as Lemma~9. 

\begin{lemma}\label{lem:fin-posets}
Let $\mathcal{P}$ be a family of finite posets, and let $\mathbb{X}$ be a finite
poset. Then $\varepsilon(\mathbb{X})\in
Q\left(\{\varepsilon(\mathbb{P}): \mathbb{P}\in \mathcal{P}\}\right)$ if and only if
for some $\mathbb{P}_1,\dots, \mathbb{P}_n\in\mathcal{P}$ there is a surjective
pp-morphism $f\colon \mathbb{P}_1\uplus\dots\uplus\mathbb{P}_n\to\mathbb{X}$.
\end{lemma}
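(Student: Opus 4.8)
The plan is to pull Lemma~\ref{lem:fin-algs-in-qvars} through the duality recalled in Section~1.2. Since $\varepsilon(\mathbb{X})$ and every $\varepsilon(\mathbb{P})$ with $\mathbb{P}\in\mathcal{P}$ is finite, Lemma~\ref{lem:fin-algs-in-qvars} tells us that $\varepsilon(\mathbb{X})\in Q(\{\varepsilon(\mathbb{P}):\mathbb{P}\in\mathcal{P}\})$ holds if and only if $\varepsilon(\mathbb{X})$ is isomorphic to a subalgebra of a finite product $\prod_{i=1}^n\varepsilon(\mathbb{P}_i)$ with each $\mathbb{P}_i\in\mathcal{P}$ (repetitions allowed); note that such a product is automatically a finite algebra, so $ISP_{\textit{fin}}$ and $ISP$ agree on this point. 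Hence it suffices to show that an embedding $\varepsilon(\mathbb{X})\hookrightarrow\prod_{i=1}^n\varepsilon(\mathbb{P}_i)$ exists precisely when there is a surjective pp-morphism $\mathbb{P}_1\uplus\dots\uplus\mathbb{P}_n\to\mathbb{X}$.

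First I would invoke the product--coproduct part of the duality to rewrite $\prod_{i=1}^n\varepsilon(\mathbb{P}_i)\cong\varepsilon(\mathbb{P}_1\uplus\dots\uplus\mathbb{P}_n)$, reducing the question to whether $\varepsilon(\mathbb{X})$ embeds into $\varepsilon(\mathbb{P}_1\uplus\dots\uplus\mathbb{P}_n)$. Then I would apply the injective--surjective part of the duality, keeping careful track of contravariance. If $h\colon\varepsilon(\mathbb{X})\hookrightarrow\varepsilon(\mathbb{Y})$ is an embedding, then $\delta(h)\colon\delta(\varepsilon(\mathbb{Y}))\to\delta(\varepsilon(\mathbb{X}))$ is a surjective pp-morphism, and composing with the natural isomorphisms $\mathbb{Y}\cong\delta(\varepsilon(\mathbb{Y}))$ and $\mathbb{X}\cong\delta(\varepsilon(\mathbb{X}))$ produces a surjective pp-morphism $\mathbb{Y}\to\mathbb{X}$. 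Conversely, a surjective pp-morphism $f\colon\mathbb{Y}\to\mathbb{X}$ yields, via $\varepsilon(f)$, an embedding $\varepsilon(\mathbb{X})\hookrightarrow\varepsilon(\mathbb{Y})$. Taking $\mathbb{Y}=\mathbb{P}_1\uplus\dots\uplus\mathbb{P}_n$ closes the loop in both directions.

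I do not expect any real obstacle: the mathematical content sits entirely in Lemma~\ref{lem:fin-algs-in-qvars} and in the duality facts recalled above (finite products dual to finite disjoint unions, injective homomorphisms dual to surjective pp-morphisms). The one point that needs genuine care is the bookkeeping of which maps are reversed, so that ``subalgebra of a finite product'' on the algebra side corresponds to ``surjective image of a finite disjoint union'' --- and not the reverse --- on the poset side; getting this direction right is what makes the ``if and only if'' come out as stated rather than upside down.
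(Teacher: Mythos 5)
Your proposal is correct and follows exactly the route the paper takes: the paper derives this lemma directly from Lemma~\ref{lem:fin-algs-in-qvars} together with the duality facts (finite products dual to finite disjoint unions, embeddings dual to surjective pp-morphisms), which is precisely what you spell out. The contravariance bookkeeping you flag is handled correctly.
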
  

\section{Subquasivariety lattice}
In general, for a quasivariety $\mathcal{Q}$, we write $L^q(\mathcal{Q})$ for
the lattice of subquasivarieties of $\mathcal{Q}$, but our focus in this section
will be on $L^q(\var{Pa})$ and $L^q(\var{Pa}_n)$. As we already mentioned,
in~\cite{Dzi85} and~\cite{AD94} it is shown that $L^q(\var{Pa})$ it is
Q-universal. In fact this result is stronger, since
the proof of the fact that the conditions for Q-universality are satisfied
by $\var{Pa}$ applies without any changes to $\var{Pa}_n$ for any $n\geq 3$.
To see it, note that the proof from~\cite{Dzi85} uses a construction
whose building blocks are Steinerian quasigroups and the corresponding posets
(see the next subsection). Inspection quickly shows that these constructions 
are carried out within $\var{Pa}_3$.  

\begin{prop}[Adams, Dziobiak]
For $n\geq 3$ the variety $\var{Pa}_n$ is Q-universal.
\end{prop}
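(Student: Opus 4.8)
The plan is to reduce the claim, via the abstract criterion of Adams and Dziobiak, to a combinatorial construction on finite posets, and then to observe that this construction never leaves $\var{Pa}_3$. Recall that to prove a quasivariety $\mathcal{Q}$ of finite type Q-universal it suffices (this is the sufficient condition isolated in \cite{AD94}) to exhibit an $\omega$-indexed family of finite members of $\mathcal{Q}$ that simulates all finite bipartite graphs, in the sense that the subalgebra and homomorphic-image relations among the chosen algebras faithfully encode the corresponding combinatorial containments; in \cite{Dzi85} such a family is constructed uniformly from the posets attached to Steinerian quasigroups. Since every algebra in this construction is finite and the signature is finite, Lemma~\ref{lem:fin-algs-in-qvars} together with the duality for finite p-algebras lets us run everything on the poset side: by Lemma~\ref{lem:fin-posets}, $\varepsilon(\mathbb{X})$ lies in the quasivariety generated by $\{\varepsilon(\mathbb{P}) : \mathbb{P}\in\mathcal{P}\}$ exactly when some finite disjoint union of members of $\mathcal{P}$ admits a surjective pp-morphism onto $\mathbb{X}$.

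Concretely, to each finite Steinerian quasigroup $\mathbf{Q}$ --- equivalently, to each finite Steiner triple system --- one attaches the poset $\mathbb{P}(\mathbf{Q})$ used in \cite{Dzi85}: a gadget of small height with a layer of ``point'' elements, a layer of ``block'' elements each placed above the (at most three) points it contains, and one or two auxiliary elements whose only role is to rigidify the pp-morphisms. Two points then need to be checked. First, $\varepsilon(\mathbb{P}(\mathbf{Q}))\in\var{Pa}_3$: by Lemma~\ref{lem:width} the variety $\var{Pa}_3$ is axiomatised by $\mathbf{ib}_3$, which translates through the duality into the requirement that every principal up-set have at most three maximal elements, and the ``blocks over at most three points'' shape of $\mathbb{P}(\mathbf{Q})$ makes this bound hold once one runs through the finitely many shapes of up-sets occurring in it, auxiliary elements included. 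Second, a surjective pp-morphism from $\mathbb{P}(\mathbf{Q}_1)\uplus\dots\uplus\mathbb{P}(\mathbf{Q}_k)$ onto $\mathbb{P}(\mathbf{Q})$ exists if and only if $\mathbf{Q}$ is, in the relevant sense, assembled from the $\mathbf{Q}_i$: here the defining rigidity of Steiner systems --- any two points lie in a unique block --- forces blocks to map to blocks and points to points, and this is precisely what is established in \cite{Dzi85}. The one new thing to observe is that both of these verifications, and with them the whole argument of \cite{Dzi85}, take place inside $\var{Pa}_3$.

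Granting this, the family $\{\varepsilon(\mathbb{P}(\mathbf{Q})) : \mathbf{Q}\ \text{a finite Steinerian quasigroup}\}$ is exactly the family of finite algebras demanded by the criterion of \cite{AD94}, and it lies entirely inside $L^q(\var{Pa}_3)$; hence $\var{Pa}_3$ is Q-universal. For arbitrary $n\geq 3$ we have $\var{Pa}_3\subseteq\var{Pa}_n$ by Theorem~\ref{thm:var-descr}, and, $\var{Pa}_3$ being a variety, the subquasivarieties of $\var{Pa}_n$ that are contained in $\var{Pa}_3$ are closed under the meet and join of $L^q(\var{Pa}_n)$; so $L^q(\var{Pa}_3)$ is a sublattice (in fact a principal ideal) of $L^q(\var{Pa}_n)$, and a homomorphic image of a sublattice of $L^q(\var{Pa}_3)$ is a homomorphic image of a sublattice of $L^q(\var{Pa}_n)$, whence Q-universality transfers from $\var{Pa}_3$ to $\var{Pa}_n$. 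The main obstacle --- the only step that is not bookkeeping --- is the second verification above: confirming that the pp-morphisms between finite disjoint unions of the Steinerian posets remain as rigid as the argument requires once one has restricted to posets all of whose principal up-sets have width at most three, so that no combinatorial information is lost by working inside $\var{Pa}_3$ rather than in all of $\var{Pa}$.
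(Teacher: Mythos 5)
Your proposal is correct and follows essentially the same route as the paper: both arguments defer to the Steinerian-poset construction of Dziobiak and Adams--Dziobiak and reduce the claim to the observation that every poset $P(\alg{Q})$ occurring there has all principal up-sets with at most three maximal elements, so the entire construction is carried out inside $\var{Pa}_3$, whence Q-universality passes to $\var{Pa}_n$ for all $n\geq 3$. The paper states this as a one-line inspection; you additionally spell out the dual reading of $\mathbf{ib}_3$ and the transfer of Q-universality along the ideal $L^q(\var{Pa}_3)\subseteq L^q(\var{Pa}_n)$, which the paper leaves implicit.
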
  

It follows in particular that $L^q(\var{Pa}_n)$ is uncountable for any
$n\geq 3$. We will improve this result slightly, showing that for every $n\geq 3$ 
the interval $[\var{Pa}_{n-1}, \var{Pa}_n)\subsetneq L^q(\var{Pa}_n)$ is uncountable
and has the largest member. Our techniques are, again, a modification
of the ``Steinerian'' constructions of~\cite{GLQ80}, but
before we apply them, we say a few more things about $L^q(\var{Pa})$.

\begin{lemma}\label{lem:gen-as-quasiv}
For each $m \in\mathbb{N}$ the variety $\var{Pa}_m$ is generated
by $\Bnalg{m}$ as a quasivariety. Therefore, for any 
quasivariety $\mathcal{Q}\subseteq \var{Pa}$ we have
$$
\var{Pa}_m\subseteq\mathcal{Q} \iff \Bnalg{m}\in\mathcal{Q}.
$$
\end{lemma}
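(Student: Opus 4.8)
The plan is to show $\var{Pa}_m = Q(\Bnalg m)$; the displayed equivalence then follows immediately. One inclusion is free: $Q(\Bnalg m)\subseteq V(\Bnalg m) = \var{Pa}_m$ by Theorem~\ref{thm:var-descr}. For the reverse inclusion, note that since $\Bnalg m$ is a finite algebra, any ultrapower of it is isomorphic to it, so $Q(\Bnalg m) = ISP(\Bnalg m)$; hence it suffices to embed every member of $\var{Pa}_m$ into a power of $\Bnalg m$.

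First I would pin down the subdirectly irreducible members of $\var{Pa}_m$. By Theorem~\ref{thm:si-descr} every subdirectly irreducible p-algebra is of the form $\sipalg{B}$, and since $\var{Pa}_m = V(\Bnalg m)$, applying J\'onsson's Lemma (available because $\var{Pa}$ is congruence-distributive, being lattice-based) shows its subdirectly irreducibles lie in $HS(\Bnalg m)$; a direct inspection of the subalgebras of $\Bnalg m$ (any subalgebra containing an element other than $0,1$ must contain the top $e$ of the Boolean part, hence is $\sipalg{B'}$ for a $\{0,e\}$-subalgebra $\alg B'$ of $\alg B_m$) and their quotients then gives that the subdirectly irreducibles of $\var{Pa}_m$ are exactly $\Bnalg 0,\Bnalg 1,\dots,\Bnalg m$. (Alternatively one extracts the same list from the axiom $\mathbf{ib}_m$ of Lemma~\ref{lem:width}, using that any infinite Boolean algebra has infinitely many pairwise disjoint nonzero elements.)

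The key step is then: \emph{for every $k\le m$, $\Bnalg k$ embeds into $\Bnalg m$.} The case $k=0$ is trivial, since $\{0,1\}$ is always a subalgebra isomorphic to $\alg 2 = \Bnalg 0$. For $1\le k\le m$, I would first fix a $\{0,1\}$-embedding of Boolean algebras $\iota\colon\alg B_k\hookrightarrow\alg B_m$ (it exists because $k\le m$: partition the $m$ atoms of $\alg B_m$ into $k$ nonempty blocks and send the $i$-th atom of $\alg B_k$ to the join of the $i$-th block), and then extend it to $\hat\iota\colon\Bnalg k\to\Bnalg m$ by sending the top element of $\Bnalg k$ to the top element of $\Bnalg m$. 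Since $\iota$ preserves $\wedge,\vee,0$ and the top $e$ of the Boolean part, it is routine that $\hat\iota$ preserves $\wedge,\vee,0,1$, and it is clearly injective; the only point that needs genuine attention is preservation of ${}^*$, which reduces (via the definition in Theorem~\ref{thm:si-descr}) to the identity $\iota(\neg x) = \neg\iota(x)$ for $x\in\alg B_k\setminus\{0\}$ together with the observation that $\iota(x)\neq 0$ there — both immediate from $\iota$ being a $\{0,1\}$-embedding of Boolean algebras.

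Finally I would assemble the pieces: given $\alg A\in\var{Pa}_m$, Birkhoff's subdirect representation embeds $\alg A$ into a product $\prod_{i\in I}\alg A_i$ of subdirectly irreducible quotients $\alg A_i\in\var{Pa}_m$, each of which is some $\Bnalg{k_i}$ with $k_i\le m$ and hence embeds into $\Bnalg m$ by the previous paragraph; composing, $\alg A$ embeds into $\Bnalg m^{I}$, so $\alg A\in ISP(\Bnalg m) = Q(\Bnalg m)$. This proves $\var{Pa}_m\subseteq Q(\Bnalg m)$, hence $\var{Pa}_m = Q(\Bnalg m)$. For the stated equivalence: if $\var{Pa}_m\subseteq\mathcal Q$ then $\Bnalg m\in\mathcal Q$ trivially, and if $\Bnalg m\in\mathcal Q$ then $\var{Pa}_m = Q(\Bnalg m)\subseteq\mathcal Q$ since $\mathcal Q$ is a quasivariety. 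The main obstacle is the embedding $\Bnalg k\hookrightarrow\Bnalg m$ — specifically the verification that pseudo-complementation is respected — together with making sure the description of the subdirectly irreducibles of $\var{Pa}_m$ is correctly read off from Lee's theorems; everything else is bookkeeping.
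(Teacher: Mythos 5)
Your proposal is correct and follows essentially the same route as the paper: identify the subdirectly irreducibles of $\var{Pa}_m$ as $\Bnalg{0},\dots,\Bnalg{m}$, use Birkhoff's subdirect representation to get $\alg{A}\in SP\{\Bnalg{k}:k\le m\}$, and then absorb the subalgebras via $\Bnalg{k}\le\Bnalg{m}$ to conclude $\alg{A}\in SPS(\Bnalg{m})=SP(\Bnalg{m})$. The paper simply cites the classification of the subdirectly irreducibles and the embeddings $\Bnalg{k}\le\Bnalg{m}$ without proof, whereas you verify them explicitly (via J\'onsson's Lemma and the atom-partition embedding); both verifications are sound.
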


\begin{proof}
The subdirectly irreducible algebras in $\var{Pa}_m$ are
precisely the algebras $\Bnalg{k}$ for $k\leq m$. Hence, by subdirect
representation, every algebra $\alg{A}\in\var{Pa}_m$ belongs
to $SP\{\Bnalg{k}: k\leq m\}$; since $\Bnalg{k} \leq \Bnalg{m}$ for $k\leq m$,
we have $\alg{A}\in SPS(\Bnalg{m}) = SP(\Bnalg{m})$.
\end{proof}

Next, we will see that the property $\Bnalg{m}\notin\mathcal{Q}$ can be
characterised by a quasiequation, and therefore there exists the largest
quasivariety not containing $\mathcal{Q}$. This property of $L^q(\var{Pa})$
can be described in purely lattice-theoretic terms as a \emph{splitting}.
We say that a pair of elements $(a,b)\in L^2$ splits a lattice $L$
if (i) $a\not\leq b$, and (ii) $a\leq c \text{ or } c\leq b$ holds for any $c\in
L$. Splittings in lattices of varieties and quasivarieties are an important
divide-and-conquer method of analysing these lattices (see McKenzie~\cite{McK72} 
for a classic in the theory). From general results of Wroński~\cite{Wro81}
it follows that each $\var{Pa}_m$ \emph{splits} the lattice of
subquasivarieties of $\var{Pa}$, or in other words that $\Bnalg{m}$ is a
\emph{splitting algebra} in $\var{Pa}$. Indeed, it suffices to exhibit
a quasiequation axiomatising the largest quasivariety that does not contain
$\var{Pa}_m$. One way of obtaining such a quasi-identity is to take
$$
D^+(\Bnalg{m})\implies (x_e = x_1),
$$
where $D^+(\Bnalg{m})$ is the positive unnested diagram of
$\Bnalg{m}$. Wroński used essentially this technique to obtain his results
announced in~\cite{Wro81}. We will however take a more
scenic, roundabout route. 

First we exhibit another set of quasi-identities, also axiomatising
the splitting companions of $\var{Pa}_m$ in $L^q(\var{Pa})$. They are shorter, and
easier to work with.  
\begin{equation*}
\bigFOand_{1\leq i\leq n} (x^*_i = \bigvee_{j\neq i} x_j)
\tag{$\mathbf{qb}_n$}\ \implies \bigvee_{1\leq i\leq n} x_i = 1.
\end{equation*}

\begin{lemma}\label{lem:charact-Bn}
For all $n\geq 1$, a p-algebra $\alg{A}$ satisfies \textup($\mathbf{qb}_n$\textup)
if and only if\/ $\Bnalg{n}\not\leq\alg{A}$.
\end{lemma}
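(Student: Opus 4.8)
The plan is to prove the two implications separately, in each case by contraposition. For the easy half, I would show that if $\Bnalg{n}\leq\alg{A}$ then $\alg{A}$ refutes $(\mathbf{qb}_n)$: letting $c_1,\dots,c_n$ be the atoms of the Boolean part $\alg{B}_n$ of the embedded copy, and noting that subalgebras preserve ${}^*$, $\vee$ and the constant $1$, in $\alg{A}$ we have $c_i^*=\neg c_i=\bigvee_{j\neq i}c_j$ for every $i$, so the premise of $(\mathbf{qb}_n)$ holds at $(c_1,\dots,c_n)$; but $\bigvee_i c_i$ is the top $e$ of $\alg{B}_n$, and $e\neq 1$ in $\Bnalg{n}$, hence $e\neq 1^{\alg{A}}$, so the conclusion fails.

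The substantive half is: if $\alg{A}$ refutes $(\mathbf{qb}_n)$ then $\Bnalg{n}\leq\alg{A}$. Assume $a_1,\dots,a_n\in A$ witness the failure, so $a_i^*=\bigvee_{j\neq i}a_j$ for all $i$ while $a:=\bigvee_i a_i\neq 1$. Two preliminary facts: since $a_j\le a_i^*$ for $j\neq i$ we get $a_i\wedge a_j\le a_i\wedge a_i^*=0$, so the $a_i$ are pairwise disjoint; and if some $a_i=0$ then $1=0^*=a_i^*\le a$, a contradiction, so every $a_i\neq 0$. For $S\subseteq\{1,\dots,n\}$ put $a_S:=\bigvee_{i\in S}a_i$. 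Distributivity and disjointness give $a_S\wedge a_T=a_{S\cap T}$ and $a_S\vee a_T=a_{S\cup T}$, and using the identity $(\bigvee_i x_i)^*=\bigwedge_i x_i^*$ (valid in every distributive p-algebra) together with the hypothesis,
$$
a_S^*=\bigwedge_{i\in S}a_i^*=\bigwedge_{i\in S}\Bigl(\bigvee_{j\neq i}a_j\Bigr)=a_{\{1,\dots,n\}\setminus S}\qquad(S\neq\emptyset),
$$
the last step by distributing and discarding the vanishing non-constant terms, while $a_\emptyset^*=0^*=1^{\alg{A}}$ and $(1^{\alg{A}})^*=0=a_\emptyset$. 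Hence $C:=\{a_S:S\subseteq\{1,\dots,n\}\}\cup\{1^{\alg{A}}\}$ is a subalgebra of $\alg{A}$. The $a_S$ are pairwise distinct (if $i\in S\setminus T$ then $a_i=a_i\wedge a_T=0$, contradiction) and each lies below $a\neq 1^{\alg{A}}$, so $|C|=2^n+1$ and the assignment $c_i\mapsto a_i$ extends to a bijection $\Bnalg{n}\to C$ which, by the identities for $\wedge$, $\vee$ and ${}^*$ recorded above, is an isomorphism; thus $\Bnalg{n}\leq\alg{A}$.

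I expect the only real work to be the verification in the second half that $C$ is closed under ${}^*$, which rests on $(\bigvee x_i)^*=\bigwedge x_i^*$ and on the computation $\bigwedge_{i\in S}\bigl(\bigvee_{j\neq i}a_j\bigr)=a_{\{1,\dots,n\}\setminus S}$; the rest is bookkeeping. One could alternatively route this through the Diagram Lemma (Lemma~\ref{prop:diag-lemma}), observing that $\Bnalg{n}\not\le\alg{A}$ is exactly the failure of $\exists\bar x\,D(\Bnalg{n})$ in $\alg{A}$, but the explicit construction above is more transparent and, unlike the finite duality of the previous subsection, imposes no finiteness assumption on $\alg{A}$.
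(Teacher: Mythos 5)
Your proposal is correct and follows essentially the same route as the paper: the forward direction evaluates the variables at the atoms of $\Bnalg{n}$, and the converse shows the witnesses $a_i$ are pairwise disjoint and nonzero, computes $\bigl(\bigvee_{i\in S}a_i\bigr)^*=\bigvee_{i\notin S}a_i$ by distributing and discarding vanishing terms, and concludes that the $a_S$ together with $1$ form a subalgebra isomorphic to $\Bnalg{n}$. The only difference is presentational: you verify explicitly the closure and the bijection that the paper dispatches as ``straightforward'' after its Claims 1--3.
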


\begin{proof}
Note that $\Bnalg{n}\not\models \mathbf{qb}_n$. To see it, evaluate
$x_1,\dots,x_n$ to the $n$ atoms of $B_n$. Then the antecedent is clearly
satisfied, but the consequent evaluates to $e$. Hence, every $\alg{A}$ with
$\Bnalg{n}\leq\alg{A}$ has $\alg{A}\not\models
\mathbf{qb}_n$. This proves the forward direction.
For the converse, let $\alg{A}\not\models\mathbf{qb}_n$ and
consider a falsifying valuation with $v(x_i) = a_i$.

\noindent
\emph{Claim 0.\/} For all $i\neq j$ we have 
$a_i\wedge a_j = 0$. To see it, note that
$a^*_i = \bigvee_{j\neq i} a_j$ implies
$$
0 = a_i\wedge a^*_i = a_i\wedge \bigvee_{j\neq i} a_j 
= \bigvee_{j\neq i} (a_i\wedge a_j)
$$
and hence $a_i\wedge a_j = 0$ as claimed.

\noindent
\emph{Claim 1.\/} The set $\{a_1,\dots,a_n\}$ is an antichain. 
For suppose
$a_{i_0}\leq a_{j_0}$. As $a_{i_0} \wedge a_{j_0} = 0$ we have $a_{i_0} = 0$, so
$a^*_{i_0} = 1$ and so by the antecedent
$\bigvee_{i\neq i_0} a_i = 1$. Hence, $\bigvee_{1\leq i\leq n} a_i = 1$,
so $\mathbf{qb}_n$ holds, contrary to the assumption. This proves the claim.

\noindent
\emph{Claim 2.\/} For any $I,J\subseteq\{1,\dots,n\}$, if $I\neq J$ then
$\bigvee_{i\in I} a_i \neq \bigvee_{j\in J} a_j$. Suppose otherwise and let
$i_0\in I\setminus J$. Then
$$
a_{i_0} = a_{i_0}\wedge \bigvee_{i\in I} a_i = 
a_{i_0}\wedge \bigvee_{j\in J} a_j = \bigvee_{j\in J}(a_{i_0}\wedge a_j) = 0
$$
which contradicts Claim~1.

\noindent
\emph{Claim 3.\/} For every $i$ we have $a^{**}_i = a_i$.
By the antecedent of ($\mathbf{qb}_n$), using distributivity of $*$ over join, we get
$$
a^{**}_i = \bigl(\bigvee_{j\neq i} a_j\bigr)^* =
\bigwedge_{j\neq i} a^*_j = \bigwedge_{j\neq i}\bigvee_{k\neq j} a_k
$$
Further, by distributivity
$$
\bigwedge_{j\neq i}\bigvee_{k\neq j} a_k = 
\bigvee_f\bigwedge_{s\neq i} a_{f(s)}
$$
where $f$ ranges over choice functions of the family $\{N_r: 1\leq r\leq n\}$
with $N_r = \{1,\dots,n\}\setminus\{r\}$. The only non-zero meet
$\bigwedge_{s\neq i} a_{f(s)}$ comes from the constant function $f(s) = i$,
and so we get the desired
$$
\bigvee_f\bigwedge_{s\neq i} a_{f(s)} = a_i.
$$

Now, combining Claims~1, 2, 3 and distributivity of $*$ over join, it is
straightforward to show that
$(\bigvee_{i\in I} a_i)^* = \bigvee_{i\notin I}a_i$ for every non-empty
$I\subseteq \{1,\dots, n\}$; in particular $(\bigvee_{1\leq i\leq n} a_i)^* = 0$.   
Since $\alg{A}\not\models \mathbf{qb}_n$ we must have
$\bigvee_{1\leq i\leq n} a_i < 1$, and therefore the subalgebra
of $\alg{A}$ generated by $\{a_1,\dots,a_n\}$ is isomorphic to
$\Bnalg{n}$. 
\end{proof}

Note that $\mathbf{qb}_1$ excludes $\Bnalg{1}$, so p-algebras satisfying
it are precisely Boolean algebras, that is, $\mathrm{Mod}(\mathbf{qb}_1) = \var{Pa}_0$.
Analogously, $\mathbf{qb}_2$ excludes $\Bnalg{2}$, so p-algebras satisfying
it are precisely Stone algebras, that is, $\mathrm{Mod}(\mathbf{qb}_2) = \var{Pa}_1$.
For $n\geq 3$ this simple picture breaks down. We have work to do.

\begin{lemma}\label{lem:greatest-qvar}
The quasivariety $\mathrm{Mod}(\mathbf{qb}_n)$ is the splitting companion of\/
$\var{Pa}_n$ in $L^q(\var{Pa})$. Hence $\mathrm{Mod}(\mathbf{qb}_n,
\mathbf{ib}_n)$ is the largest proper subquasivariety of\/ $\var{Pa}_n$. 
\end{lemma}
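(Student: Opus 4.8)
The plan is to check, one at a time, the two clauses in the definition of a splitting pair for $(\var{Pa}_n, \mathrm{Mod}(\mathbf{qb}_n))$ inside $L^q(\var{Pa})$, drawing on the lemmas already proved. One should first observe that $\mathbf{qb}_n$ genuinely is a quasi-identity: the subexpressions $\bigvee_{j\neq i} x_j$ and $\bigvee_{1\leq i\leq n} x_i$ are finite joins over fixed index sets, hence terms, so the antecedent is a finite conjunction of equations and the consequent a single equation; thus $\mathrm{Mod}(\mathbf{qb}_n)$ is a bona fide subquasivariety of $\var{Pa}$.

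For clause (i), that $\var{Pa}_n\not\subseteq\mathrm{Mod}(\mathbf{qb}_n)$: the algebra $\Bnalg{n}$ lies in $\var{Pa}_n$ but fails $\mathbf{qb}_n$, exactly as noted at the start of the proof of Lemma~\ref{lem:charact-Bn} (evaluate $x_1,\dots,x_n$ at the $n$ atoms of $B_n$). For clause (ii), let $\mathcal{Q}\subseteq\var{Pa}$ be an arbitrary quasivariety; I must show $\var{Pa}_n\subseteq\mathcal{Q}$ or $\mathcal{Q}\subseteq\mathrm{Mod}(\mathbf{qb}_n)$. Suppose the second fails, so some $\alg{A}\in\mathcal{Q}$ has $\alg{A}\not\models\mathbf{qb}_n$. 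By Lemma~\ref{lem:charact-Bn} this says precisely that $\Bnalg{n}\leq\alg{A}$, and since quasivarieties are closed under subalgebras, $\Bnalg{n}\in\mathcal{Q}$; by Lemma~\ref{lem:gen-as-quasiv} this yields $\var{Pa}_n\subseteq\mathcal{Q}$. Together with clause (i) this shows that $\mathrm{Mod}(\mathbf{qb}_n)$ is the largest quasivariety of p-algebras not containing $\var{Pa}_n$, i.e. its splitting companion.

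For the ``hence'' clause: by Lemma~\ref{lem:width}, $\var{Pa}_n = \mathrm{Mod}(\mathbf{ib}_n)$, so $\mathrm{Mod}(\mathbf{qb}_n,\mathbf{ib}_n) = \mathrm{Mod}(\mathbf{qb}_n)\cap\var{Pa}_n$ is a subquasivariety of $\var{Pa}_n$, proper because it omits $\Bnalg{n}$. For maximality, take any proper subquasivariety $\mathcal{Q}\subsetneq\var{Pa}_n$. Then $\Bnalg{n}\notin\mathcal{Q}$, for otherwise $Q(\Bnalg{n})\subseteq\mathcal{Q}$ and, as $Q(\Bnalg{n})=\var{Pa}_n$ by Lemma~\ref{lem:gen-as-quasiv}, this forces $\mathcal{Q}=\var{Pa}_n$. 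Applying the splitting just proved, $\mathcal{Q}\subseteq\mathrm{Mod}(\mathbf{qb}_n)$, and intersecting with $\mathcal{Q}\subseteq\var{Pa}_n$ gives $\mathcal{Q}\subseteq\mathrm{Mod}(\mathbf{qb}_n,\mathbf{ib}_n)$.

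I do not anticipate a real obstacle here: the substantive content — that failure of $\mathbf{qb}_n$ is equivalent to embeddability of $\Bnalg{n}$ — has already been isolated in Lemma~\ref{lem:charact-Bn}, and what remains is routine bookkeeping with the closure properties of quasivarieties together with Lemmas~\ref{lem:width} and~\ref{lem:gen-as-quasiv}. If anything needs care, it is only the pedantic check that $\mathbf{qb}_n$ is a legitimate quasi-identity, flagged above.
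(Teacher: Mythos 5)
Your proof is correct and follows exactly the route the paper intends: the paper's own proof is just the citation ``By Lemmas~\ref{lem:width}, \ref{lem:gen-as-quasiv} and~\ref{lem:charact-Bn}'', and you have supplied precisely the routine bookkeeping (the two splitting clauses plus the intersection argument for the ``hence'' part) that those citations compress.
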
  

\begin{proof}
By Lemmas~\ref{lem:width}, \ref{lem:gen-as-quasiv} and~\ref{lem:charact-Bn}.
\end{proof}

We will write $\var{Pa}_n^-$ for $\mathrm{Mod}(\mathbf{qb}_n)\cap\var{Pa}_n$.
Analogously, we will write $\mathrm{Mod}(\mathbf{qb}_n)^+$ for 
$\mathrm{Mod}(\mathbf{qb}_n)\vee\var{Pa}_n$. Figure~\ref{fig:LQ-Pa} depicts
$L^q(\var{Pa})$. It turns out that the quasivarieties
generated by (sufficiently large) free p-algebras all belong to  
$\mathrm{Mod}(\mathbf{qb}_3)$. To prove it we will need a few facts
about the structure of free p-algebras. For the proof of the next
lemma we refer the reader to~\cite{KS24}.

\begin{lemma}\label{lem:under-each}
Let $\alg{F}_m(k)$ be the $k$-generated free algebra in $\var{Pa}_m$, and let
$\alg{F}(k)$ be the $k$-generated free algebra in $\var{Pa}$.
The following hold:
\begin{enumerate}
\item The set $\max(\delta(\alg{F}_m(k)))$ has $2^k$ elements.
\item For any $A\subseteq \max(\delta(\alg{F}_m(k)))$ with $0< |A|\leq m$ there
  is a $u\in \delta(\alg{F}_m(k))$ such that $\max(\up{u}) = A$.
\item If $m\geq 2^k$, then  $u\in \delta(\alg{F}_m(k))$ with
  $\max(\up{u}) = \max\bigl(\delta(\alg{F}_m(k))\bigr)$ 
is unique, and it is the bottom element of $\delta(\alg{F}_m(k))$.  
\item If $m\geq 2^k$, then $1^{\alg{F}_m(k))}$ is join-irreducible; hence
$\{1^{\alg{F}_m(k))}\}$ is a prime filter of $\alg{F}_m(k))$.  Moreover,
$\alg{F}_m(k)  = \alg{F}(k)$.
\end{enumerate}
\end{lemma}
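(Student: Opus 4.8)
The plan is to push everything through the finite Priestley duality recalled above. One first checks that $\delta(\Bnalg{m})$ is the \emph{$m$-fan} $\mathbb{F}_m$: a bottom point, which is the prime filter $\{1^{\Bnalg{m}}\}$, together with $m$ pairwise incomparable points above it, one for each atom of $\alg{B}_m$. Since $\var{Pa}_m=V(\Bnalg{m})$ by Theorem~\ref{thm:var-descr}, the usual description of free algebras in a variety generated by one finite algebra gives $\alg{F}_m(k)=\mathrm{Sg}^{\Bnalg{m}^{N}}(\pi_1,\dots,\pi_k)$ with $N=|\Bnalg{m}|^{k}$ and $\pi_i$ the $i$-th coordinate projection; dualising the inclusion, $\delta(\alg{F}_m(k))$ is the image of a surjective pp-morphism $q\colon\biguplus_{\mathbf{b}\in\Bnalg{m}^{k}}\mathbb{F}_m\to\delta(\alg{F}_m(k))$, namely restriction of prime filters of $\Bnalg{m}^{N}$ to $\alg{F}_m(k)$. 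Write $\bot_{\mathbf{b}}$ and, for an atom $t$ of $\alg{B}_m$, $t_{\mathbf{b}}$, for the bottom and the tops of the $\mathbf{b}$-th copy of $\mathbb{F}_m$. Using that $x\mapsto[t\le x]$ (the truth value, $1$ or $0$) is a homomorphism $\Bnalg{m}\to\alg{2}$, one sees that $q(t_{\mathbf{b}})$ equals $h^{-1}(1)$ for the homomorphism $h\colon\alg{F}_m(k)\to\alg{2}$ with $h(g_i)=[t\le b_i]$, and that $q(\bot_{\mathbf{b}})=\{a\in\alg{F}_m(k):a(\mathbf{b})=1^{\Bnalg{m}}\}$, where $a(\mathbf{b})$ is the image of $a$ under the homomorphism $\alg{F}_m(k)\to\Bnalg{m}$ sending $g_i\mapsto b_i$.

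As a homomorphism to $\alg{2}$ is determined by its values on the generators, $q$ identifies $t_{\mathbf{b}}$ with $t'_{\mathbf{b}'}$ exactly when $(i\mapsto[t\le b_i])$ and $(i\mapsto[t'\le b'_i])$ are the same map $\{g_1,\dots,g_k\}\to\{0,1\}$; and every such map is realised, by taking $b_i$ to be the join of those atoms to which a prescribed assignment attaches a map with $i$-th value $1$. One then checks that the maximal points of $\delta(\alg{F}_m(k))$ are precisely the points $q(t_{\mathbf{b}})$, so there are exactly $2^{k}$ of them: this is \textbf{(1)}. For \textbf{(2)}, given $A$ with $0<|A|\le m$, view the points of $A$ as such maps, fix a surjection from the atoms of $\alg{B}_m$ onto $A$, and build $\mathbf{b}$ from it as above; the pp-morphism identity $q(\max\up{x})=\max\up{q(x)}$ applied at $\bot_{\mathbf{b}}$ then gives $\max\up{q(\bot_{\mathbf{b}})}=\{q(t_{\mathbf{b}}):t\text{ an atom of }\alg{B}_m\}=A$, so $u:=q(\bot_{\mathbf{b}})$ works.

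For \textbf{(3)} and the join-irreducibility in \textbf{(4)}, apply (2) with $A=\max\delta(\alg{F}_m(k))$, which has $2^{k}\le m$ elements, to obtain $u$ with $\max\up{u}=\max\delta(\alg{F}_m(k))$. Any such $u$ is of the form $q(\bot_{\mathbf{b}})$ for a \emph{generic} $\mathbf{b}$, one whose $m$ fan-tops realise all $2^{k}$ maps (if $u=q(t_{\mathbf{b}})$ for a top, $\max\up{u}$ would be a single point, impossible for $k\ge1$; the case $k=0$ is trivial). The crux is that $q(\bot_{\mathbf{b}})=\{1^{\alg{F}_m(k)}\}$ for generic $\mathbf{b}$. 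Genericity forces each $b_i$ to be a nonzero non-top element of $\alg{B}_m$ all of whose minterms $\bigwedge_{i}c_i$ (with $c_i\in\{b_i,\neg b_i\}$) are nonzero, so $b_1,\dots,b_k$ generate in $\Bnalg{m}$ a copy of $\Bnalg{2^{k}}$; thus $a\mapsto a(\mathbf{b})$ surjects onto $\Bnalg{2^{k}}$, and since any two generic evaluations differ by an automorphism of $\Bnalg{2^{k}}$, the value $a(\mathbf{b})$ — in particular whether it is the join-irreducible top $1^{\Bnalg{2^{k}}}$ — does not depend on the generic $\mathbf{b}$; the remaining point (the delicate one) is that the only $a$ with $a(\mathbf{b})=1^{\Bnalg{2^{k}}}$ is $1^{\alg{F}_m(k)}$. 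Granting this, $u=\{1^{\alg{F}_m(k)}\}$: hence $1^{\alg{F}_m(k)}$ is join-irreducible, $u$ is unique, and $u$ is the least element of $\delta(\alg{F}_m(k))$, being contained in every prime filter. Finally, for the equality $\alg{F}_m(k)=\alg{F}(k)$: a subdirectly irreducible quotient of $\alg{F}(k)$ is a $k$-generated subdirectly irreducible p-algebra, hence by Theorem~\ref{thm:si-descr} is finite of the form $\Bnalg{j}$; since $\Bnalg{j}$ has $\alg{B}_j$ as a homomorphic image (the map $x\mapsto x^{**}$ onto the Boolean algebra of regular elements), $\alg{B}_j$ is $\le k$-generated and so $j\le 2^{k}$. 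Thus $\alg{F}(k)$ is a subdirect product of algebras $\Bnalg{j}$ with $j\le 2^{k}$, whence $\alg{F}(k)\in V(\Bnalg{2^{k}})=\var{Pa}_{2^{k}}\subseteq\var{Pa}_m$ and $\alg{F}(k)$ is finite; the canonical surjections $\alg{F}_m(k)\twoheadrightarrow\alg{F}(k)$ and $\alg{F}(k)\twoheadrightarrow\alg{F}_m(k)$ between these finite algebras have bijective composites and are therefore mutually inverse isomorphisms.

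The main obstacle is the crux above: showing that a generic evaluation already witnesses every nontrivial behaviour, so that $q(\bot_{\mathbf{b}})$ collapses to $\{1^{\alg{F}_m(k)}\}$ and $\delta(\alg{F}_m(k))$ gains a least element (equivalently, that $1$ is join-irreducible, which reduces to its being join-irreducible among the dense elements). This is precisely where the hypothesis $m\ge 2^{k}$ enters, and it needs care because pseudocomplementation in $\varepsilon(\mathbb{X})$ is non-local — $U^{*}=\mathbb{X}\setminus\dw{U}$ — so whether a point lies in a given element of $\alg{F}_m(k)$ depends on its entire downset; in each fan, however, that downset is just the point together with the fan's bottom, which keeps the bookkeeping manageable.
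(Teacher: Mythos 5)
The paper itself does not prove this lemma (it defers to \cite{KS24}), so there is no in-paper argument to compare against; judging your proposal on its own terms: parts (1) and (2) are correct and essentially complete, and so is the final claim of (4) (the identification $\alg{F}_m(k)=\alg{F}(k)$ via the bound $j\le 2^k$ on $k$-generated subdirectly irreducibles). For (3) and the join-irreducibility claim in (4), however, there is a genuine gap, and you flag it yourself: you reduce everything to the assertion that for a generic $\mathbf{b}$ the set $\{a: a(\mathbf{b})=1\}$ is $\{1^{\alg{F}_m(k)}\}$, and then write ``Granting this''. That assertion \emph{is} the content of those parts --- $q(\bot_{\mathbf{b}})$ being the least prime filter is literally equivalent to its being $\{1\}$, since $\{1\}$ is the intersection of all prime filters --- and nothing you establish beforehand (surjectivity of $e_{\mathbf{b}}$ onto $\Bnalg{2^k}$, independence of the generic $\mathbf{b}$ up to automorphism) bears on it. The most tempting route, transporting $a(\mathbf{b})=1$ to $a(\mathbf{c})=1$ along a homomorphism $\mathrm{Sg}(\mathbf{b})\to\Bnalg{m}$ sending $b_i\mapsto c_i$, fails outright: no such homomorphism exists in general, because $\mathrm{Sg}(\mathbf{b})$ satisfies relations such as $b_1\vee b_1^*=b_2\vee b_2^*$ that an arbitrary $\mathbf{c}$ (say $c_1=1$ and $c_2$ a non-dense Boolean element) violates. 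So the generic evaluation is not faithful, and one must argue specifically about the congruence class of $1$.

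The gap is fillable from where you stopped. Write $\Bnalg{m}=\varepsilon(F_m)$ with $F_m$ the $m$-fan (bottom $\beta$, tops $\tau_1,\dots,\tau_m$), so an evaluation is a tuple of upsets and $a(\mathbf{d})=1$ iff $\beta\in a(\mathbf{d})$. Suppose $a(\mathbf{c})\neq 1$ for some $\mathbf{c}\in\Bnalg{m}^k$. If some top $\tau_j\notin a(\mathbf{c})$, then the $\alg{2}$-type $i\mapsto[\tau_j\in c_i]$ kills $a$ in $\alg{2}$; genericity says $\mathbf{b}$ realises that type, so the corresponding top is missing from $a(\mathbf{b})$ and $a(\mathbf{b})\neq 1$. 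Otherwise $a(\mathbf{c})=e$, i.e.\ $\beta\notin a(\mathbf{c})$, and an induction on the term shows $[\beta\in a(\mathbf{b})]\le[\beta\in a(\mathbf{c})]$: at the leaves $[\beta\in b_i]=0$ since generic $b_i$ are not the top; $\cap$ and $\cup$ are monotone; and $[\beta\in s^*(\mathbf{d})]=[s(\mathbf{d})=\emptyset]$ is decided entirely by which $\alg{2}$-types $\mathbf{d}$ realises, and $\mathbf{b}$ realises them all, whence $[s(\mathbf{b})=\emptyset]\le[s(\mathbf{c})=\emptyset]$. Either way $a(\mathbf{b})\neq 1$, which is the contrapositive of your crux. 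Until an argument of this sort is supplied, your proof of (3) and of the join-irreducibility of $1^{\alg{F}_m(k)}$ in (4) is incomplete.
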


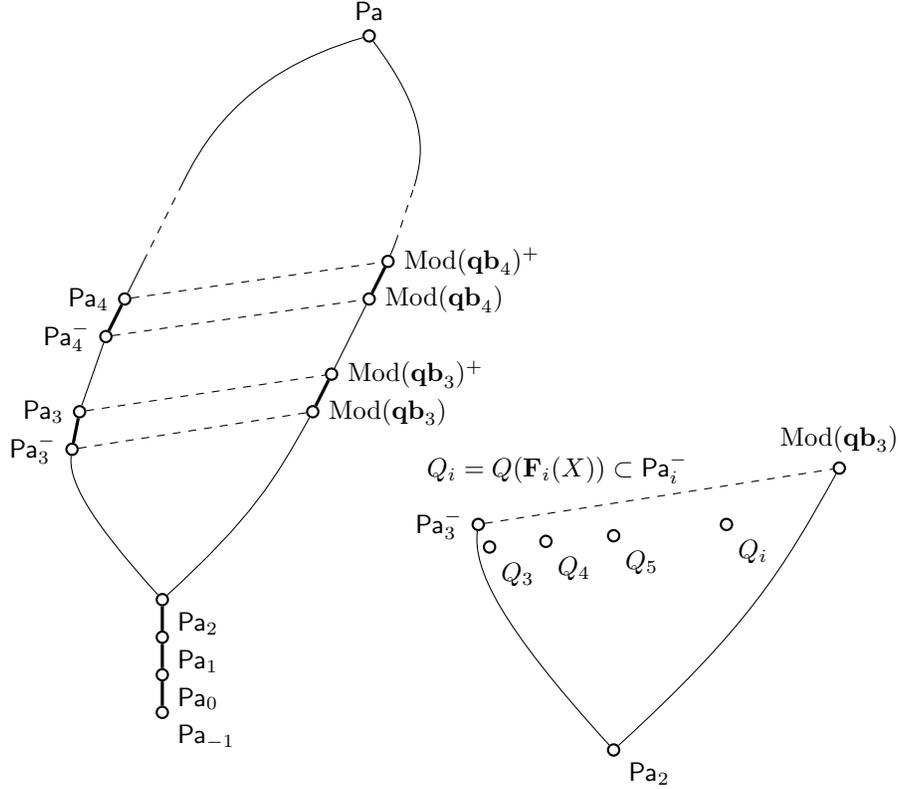
\begin{figure}[h]
\begin{tikzpicture}
\begin{scope}    
\node[dot, label=-30:$\var{Pa}_{-1}$] (v1) at (0,0.5) {};
\node[dot, label=-30:$\var{Pa}_{0}$] (v2) at (0,1) {};
\node[dot, label=-30:$\var{Pa}_{1}$] (v3) at (0,1.5) {};
\node[dot, label=-30:$\var{Pa}_{2}$] (v4) at (0,2) {};
\node[dot, label=left:$\var{Pa}_{3}^-$] (v5) at (-1.2,4) {};
\node[dot, label=left:$\var{Pa}_{3}$] (v6) at (-1.1,4.5) {};
\node[dot, label=left:$\var{Pa}_{4}^-$] (v7) at (-0.75,5.5) {};
\node[dot, label=left:$\var{Pa}_{4}$] (v8) at (-0.5,6) {};
\node[coordinate] (v13) at (-0.25,6.5) {};
\node[coordinate] (v14) at (0.25,7.5) {};
\node[dot, label=right:$\mathrm{Mod}(\mathbf{qb}_3)$] (v9) at (2,4.5) {};
\node[dot, label=right:$\mathrm{Mod}(\mathbf{qb}_3)^+$] (v10) at (2.25,5) {};
\node[dot, label=right:$\mathrm{Mod}(\mathbf{qb}_4)$] (v11) at (2.75,6) {};
\node[dot, label=right:$\mathrm{Mod}(\mathbf{qb}_4)^+$] (v12) at (3,6.5) {};
\node[coordinate] (v15) at (3.1,6.75) {};
\node[coordinate] (v16) at (3.35,7.5) {};
\node[dot, label=above:$\var{Pa}$] (v17) at (2.75,9.5) {};

\path[draw,very thick] (v1)--(v2)--(v3)--(v4);
\path[draw] (v5)--(v6)--(v7)--(v8)--(v13);
\draw[dashed] (v13)--(v14) (v15)--(v16);
\path[draw] (v9)--(v10)--(v11)--(v12)--(v15);
\draw[very thick] (v5)--(v6) (v7)--(v8) (v9)--(v10) (v11)--(v12);
\draw[dashed] (v5)--(v9) (v6)--(v10) (v7)--(v11) (v8)--(v12);

\draw (v4).. controls (-1,3.1) and (-1.25,3.6) .. (v5);
\draw (v4).. controls (1.2,3.1) and (1.5,3.6) .. (v9);

\draw (v14).. controls (0.5,8) and (1,9) .. (v17);
\draw (v16).. controls (3.5,8) and (3.5,8.5) .. (v17);
\end{scope}%
\begin{scope}[scale=1.5, xshift=4cm, yshift=-2cm]
\node[dot, label=-30:$\var{Pa}_{2}$] (v4) at (0,2) {};
\node[dot, label=left:$\var{Pa}_{3}^-$] (v5) at (-1.2,4) {};
\node[coordinate] (v13) at (-0.25,6.5) {};
\node[coordinate] (v14) at (0.25,7.5) {};
\node[dot, label=above:$\mathrm{Mod}(\mathbf{qb}_3)$] (v9) at (2,4.5) {};
\node[coordinate] (v15) at (3.1,6.75) {};
\node[coordinate] (v16) at (3.35,7.5) {};

\node[dot, label=-70:$Q_3$] (v1) at (-1.1,3.8) {};
\node[dot, label=-70:$Q_4$] (v2) at (-0.6,3.85) {};
\node[dot, label=-70:$Q_5$] (v3) at (0,3.9) {};
\node[dot, label=-70:$Q_i$] (v7) at (1,4) {};

\draw[dashed] (v5)--(v9);

\draw (v4).. controls (-1,3.1) and (-1.25,3.6) .. (v5);
\draw (v4).. controls (1.2,3.1) and (1.5,3.6) .. (v9);
\node (v6) at (-0.5,4.5) {$Q_i = Q(\alg{F}_i(X))\subset \var{Pa}_i^-$};
\end{scope}
\end{tikzpicture}\label{fig:LQ-Pa} 
\caption{Left: the lattice of subquasivarieties of $\var{Pa}$; thick lines indicate
  coverings. Right: the interval $[\var{Pa_2},\mathrm{Mod}(\mathbf{qb}_3)]$
  containing all quasivarieties generated by free algebras.} 
\end{figure}

\begin{lemma}\label{lem:free-qb3}
For any $m\geq 3$ and any $k\geq 2$,
the algebra $\mathbf{F}_m(k)$ satisfies $(\mathbf{qb}_3)$.
\end{lemma}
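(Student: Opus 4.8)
The plan is to reduce everything, via Lemma~\ref{lem:charact-Bn}, to showing that $\Bnalg{3}\not\leq\alg{F}_m(k)$ for $m\geq 3$ and $k\geq 2$. First I would identify the dual poset $\delta(\Bnalg{3})$. The join-irreducibles of $\Bnalg{3}$ are the three atoms $a_1,a_2,a_3$ of its Boolean part together with the extra top element $1$: the top $e$ of the Boolean part equals $a_1\vee a_2\vee a_3$, hence is not join-irreducible, while $1$ covers $e$ and every join of elements below $1$ stays below $e$, so $1$ is join-irreducible. In the dual (reversed) order $1$ becomes the minimum and $a_1,a_2,a_3$ become three pairwise incomparable elements above it. Thus $\delta(\Bnalg{3})$ is the four-element poset with one minimum and exactly three maximal elements above it, and by the duality for finite p-algebras $\Bnalg{3}\leq\alg{F}_m(k)$ is equivalent to the existence of a surjective pp-morphism $f\colon\delta(\alg{F}_m(k))\to\delta(\Bnalg{3})$. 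The whole lemma therefore comes down to ruling out such a surjection.

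For the main step, suppose towards a contradiction that $f\colon\delta(\alg{F}_m(k))\to\delta(\Bnalg{3})$ is a surjective pp-morphism. The key feature of the codomain is that for every $y\in\delta(\Bnalg{3})$ the set $\max\up{y}$ has either one element (if $y$ is one of $a_1,a_2,a_3$) or three elements (if $y$ is the minimum), and never exactly two. Since $f$ is onto and $\delta(\alg{F}_m(k))$ is finite, each atom $a_i$ of $\delta(\Bnalg{3})$ is the image of some maximal element of $\delta(\alg{F}_m(k))$ (a preimage of $a_i$ lies below a maximal element, which by order-preservation must also be sent to the maximal element $a_i$). Pick maximal $p,q\in\delta(\alg{F}_m(k))$ with $f(p)=a_1$ and $f(q)=a_2$; then $p\neq q$. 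Because $m\geq 3$, Lemma~\ref{lem:under-each}(2) applied to $A=\{p,q\}$ yields $u\in\delta(\alg{F}_m(k))$ with $\max\up{u}=\{p,q\}$. The pp-morphism condition now forces $\max\up{f(u)}=f(\max\up{u})=\{f(p),f(q)\}=\{a_1,a_2\}$, a two-element set, which contradicts the feature of $\delta(\Bnalg{3})$ noted above. Hence no such $f$ exists, so $\Bnalg{3}\not\leq\alg{F}_m(k)$, and by Lemma~\ref{lem:charact-Bn} the algebra $\alg{F}_m(k)$ satisfies $(\mathbf{qb}_3)$.

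I expect the only real content to be the combinatorial remark that $\delta(\Bnalg{3})$ has no element with exactly two maximal elements above it, paired with the fact that Lemma~\ref{lem:under-each}(2) manufactures, inside $\delta(\alg{F}_m(k))$, an element whose principal up-set has exactly two maximal elements lying in two distinct $f$-fibres. Everything else is bookkeeping. The roles of the hypotheses are then transparent: $m\geq 3$ (in fact $m\geq 2$ would suffice) is exactly what lets us invoke Lemma~\ref{lem:under-each}(2) for a two-element set $A$, and $k\geq 2$ guarantees $|\max(\delta(\alg{F}_m(k)))|=2^k\geq 3$, which is what allows the three atoms of $\delta(\Bnalg{3})$ to receive pairwise distinct maximal preimages (so that $p\neq q$ can be chosen). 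No genuine obstacle remains once Lemma~\ref{lem:under-each} is available.
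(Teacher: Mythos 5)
Your proof is correct and follows essentially the same route as the paper: reduce via Lemma~\ref{lem:charact-Bn} and duality to the non-existence of a surjective pp-morphism onto $\delta(\Bnalg{3})$, then use Lemma~\ref{lem:under-each}(2) to produce an element $u$ with $\max\up{f(u)}$ of size two, which $\delta(\Bnalg{3})$ cannot accommodate. The only (harmless) difference is that you apply Lemma~\ref{lem:under-each}(2) to a two-element set of maximal elements in distinct fibres, whereas the paper takes a three-element set with two elements in the same fibre; both yield the same contradiction.
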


\begin{proof}
By Lemma~\ref{lem:charact-Bn} and duality, it suffices to show that
there is no surjective pp-morphism 
$f\colon \delta(\mathbf{F}_m(k))\to \delta(\Bnalg{3})$. Assume
\emph{a contrario} that one exists. Let $\max(\delta(\Bnalg{3})) = \{p,q,r\}$.
By pp-morphism conditions, $f\bigl(\max(\delta(\mathbf{F}_m(k)))\bigr) = \{p,q,r\}$. 
Put $A = \max(f^{-1}(p))$, $B = \max(f^{-1}(q))$, $C = \max(f^{-1}(r))$, so that
$A\uplus B\uplus C = \max(\delta(\mathbf{F}_m(k)))$. 
Then by Lemma~\ref{lem:under-each}(1) we have 
$|A| + |B| + |C| = 2^k > 3$. Without loss of generality assume $|A|\geq 2$, and
pick elements $a_1,a_2\in A$ and $b\in B$. By Lemma~\ref{lem:under-each}(2),
there is a $u\in \delta(\alg{F}_m(k))$ with $\max(\up{u}) = \{a_1,a_2,b\}$. 

Now consider $f(u)$. Clearly $f(u)\notin \{p,q,r\}$ so $f(u)$ is the bottom
element of $\delta(\Bnalg{3})$, so $\max(\up{f(u)}) = \{p,q,r\}$.
On the other hand, $\max(\up{f(u)}) = f(\max(\up{u})) = \{f(a_1),f(a_2),f(b)\} =
\{p,q\}$, which is a contradiction. 
\end{proof}

This immediately implies the next
result, which we found somewhat curious.

\begin{theorem}\label{thm:free-algs}
For any $m\geq 3$ and any set of generators $X$ with $|X|\geq 2$, the algebra
$\mathbf{F}_m(X)$, free in $\var{Pa}_m$, belongs to 
$\mathrm{Mod}(\mathbf{qb}_3)$. The same holds for the algebra
$\mathbf{F}(X)$, free in $\var{Pa}$. 
\end{theorem}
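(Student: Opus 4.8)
The plan is to treat Theorem~\ref{thm:free-algs} as essentially a corollary of Lemma~\ref{lem:free-qb3}: that lemma already settles the case of a finite generating set of size at least $2$, and what remains is only to (a) pass from finitely generated to arbitrarily generated free algebras, and (b) reduce the $\var{Pa}$-free algebra $\mathbf{F}(X)$ to a $\var{Pa}_m$-free one. For (b), the key input is Lemma~\ref{lem:under-each}(4): if $k\geq 2$ then $2^k\geq 3$, so $\mathbf{F}(k)=\mathbf{F}_{2^k}(k)$, and any statement proved for all $\mathbf{F}_m(\cdot)$ with $m\geq 3$ transfers to $\mathbf{F}(\cdot)$ on finite generating sets.

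So first I would record the finite case: for $|X|=k\geq 2$, $\mathbf{F}_m(X)\cong\mathbf{F}_m(k)$, which satisfies $(\mathbf{qb}_3)$ by Lemma~\ref{lem:free-qb3}, hence lies in $\mathrm{Mod}(\mathbf{qb}_3)$; and $\mathbf{F}(X)\cong\mathbf{F}(k)=\mathbf{F}_{2^k}(k)$ is in $\mathrm{Mod}(\mathbf{qb}_3)$ by the same lemma with $m=2^k$. For an arbitrary, possibly infinite, $X$ with $|X|\geq 2$, I would use the finite character of quasi-identities. The quasi-identity $(\mathbf{qb}_3)$ has only the three variables $x_1,x_2,x_3$. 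Given any $a_1,a_2,a_3\in\mathbf{F}_m(X)$, each $a_i$ is the interpretation of a term in finitely many free generators, so all three lie in the subalgebra generated by some finite $Y\subseteq X$, which I may enlarge so that $|Y|\geq 2$. Since a subalgebra of a free algebra generated by a subset of the free generators is again free on that subset, $\mathbf{F}_m(Y)\cong\mathbf{F}_m(|Y|)$, so $\mathbf{F}_m(Y)\models(\mathbf{qb}_3)$ by Lemma~\ref{lem:free-qb3}; and because term operations are absolute for subalgebras, the instance of $(\mathbf{qb}_3)$ at $(a_1,a_2,a_3)$ is computed identically in $\mathbf{F}_m(Y)$ and in $\mathbf{F}_m(X)$, so it holds in the latter. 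As the triple was arbitrary, $\mathbf{F}_m(X)\models(\mathbf{qb}_3)$. The argument for $\mathbf{F}(X)$ is identical, using $\mathbf{F}(Y)\cong\mathbf{F}(|Y|)=\mathbf{F}_{2^{|Y|}}(|Y|)$.

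I do not expect a genuine obstacle here; the statement really is a short corollary, and the only thing to be careful about is the bookkeeping in the previous paragraph that lets finitely generated free subalgebras witness the quasi-identity, together with the appeal to Lemma~\ref{lem:under-each}(4) in the $\var{Pa}$ case. If one prefers to avoid the syntactic ``finite character'' phrasing, an equivalent route is to observe that $\mathbf{F}_m(X)$ embeds into the product $\prod\{\mathbf{F}_m(Y):Y\subseteq X\text{ finite},\ |Y|\geq 2\}$ via the retractions collapsing the generators outside each $Y$, and then to appeal to closure of the quasivariety $\mathrm{Mod}(\mathbf{qb}_3)$ under $S$ and $P$.
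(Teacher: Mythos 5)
Your proposal is correct and takes essentially the same route as the paper: the paper derives Theorem~\ref{thm:free-algs} as an immediate consequence of Lemma~\ref{lem:free-qb3}, and simply leaves implicit the routine reductions you spell out (locality of the three-variable quasi-identity $(\mathbf{qb}_3)$ to finitely generated free subalgebras, and the identification $\alg{F}(k)=\alg{F}_{2^k}(k)$ from Lemma~\ref{lem:under-each}(4)). Nothing further is needed.
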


Graetzer \emph{et al.}~\cite{GLQ80} prove 
that the interval $(\var{Pa}_3]$ is uncountable. By
Lemma~\ref{lem:greatest-qvar} and the fact $|L^q(\var{Pa}_2)| = 4$ we
have that $[\var{Pa}_2, \var{Pa}_3^-]$ is uncountable.
We will prove the same for any interval $[\var{Pa}_{m-1}, \var{Pa}_{m}^-]$
with $m\geq 3$.
 
\subsection{Steiner quasigroups and quasivarieties}
Following~\cite{GLQ80}, we say that a \emph{Steiner quasigroup} is an
algebra $\alg{S} = (S,\cdot)$ satisfying the identities
\begin{enumerate}
\item $x^2 = x$,
\item $xy = yx$,
\item $x(xy) = y$.
\end{enumerate}  
For any Steiner quasigroup $\alg{S}$ we define
$S_3 = \{\{x,y,xy\}: x,y\in S, x\neq y\}$. So defined $S_3$ consists of all
3-element subalgebras of $\alg{S}$, and $(S,S_3)$ is a Steiner triple system.
Next, we let $P(\alg{S}) \deq (S\cup S_3; \leq)$ with $\leq$ defined by
putting $x\leq y$ if $x=y$ or $x\in S_3, y\in S, y\in x$. Clearly,
$P(\alg{S})$ is a poset, $\min P(\alg{S}) = S_3$,
$\max P(\alg{S}) = S$, $P(\alg{S}) = S\uplus S_3$, and
$|\max[u)| = 3$ for any $u\in S_3$.  

A Steiner quasigroup $\alg{S}$ is called \emph{planar} if $|S|\geq 4$ and
any three distinct elements $a,b,c$, such that $\{a,b,c\}$ is not a subalgebra of
$\alg{S}$, generate $\alg{S}$. Doyen~\cite{Doy70} showed that
planar Steiner quasigroups exists for each $n\geq 7$ with
$n = 1\text{ or } 3 \pmod 6$. Quackenbush~\cite{Qua76} proved that
for $n\neq 9$ any planar Steiner quasigroup of cardinality $n$ is simple.
The next lemma (cf.~\cite{GLQ80} Lemma~10) is an immediate consequence. 

\begin{lemma}\label{lem:PSQ-simple}
If $\alg{S}$ and $\alg{R}$ are non-isomorphic finite planar Steiner
quasigroups, and $f:\alg{S}\to\alg{R}$ is a homomorphism, then
$|f(S)| = 1$.
\end{lemma}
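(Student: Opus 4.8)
The plan is to combine the two preceding facts: Quackenbush's simplicity result for planar Steiner quasigroups of cardinality $n\neq 9$, and the general structure theory of Steiner quasigroups. First I would recall that any homomorphism $f\colon\alg{S}\to\alg{R}$ has image a subalgebra $f(S)\leq\alg{R}$, and its kernel is a congruence $\theta$ on $\alg{S}$ with $\alg{S}/\theta\cong f(S)$. Since $\alg{S}$ is a finite planar Steiner quasigroup, in particular $|S|\geq 4$, so either $|S|\neq 9$ or $|S|=9$; in the former case $\alg{S}$ is simple by Quackenbush, so $\theta$ is either the identity or the all-relation. If $\theta$ is the all-relation we get $|f(S)|=1$ and we are done; if $\theta$ is the identity then $f$ is injective, so $f(S)\cong\alg{S}$ is a subalgebra of $\alg{R}$.

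The remaining work is to rule out that last possibility, i.e. to show a finite planar Steiner quasigroup $\alg{R}$ cannot contain an isomorphic copy of a non-isomorphic finite planar Steiner quasigroup $\alg{S}$ as a proper subalgebra — equivalently, that a finite planar Steiner quasigroup has no proper subalgebra of size $\geq 4$. This is where planarity of $\alg{R}$ does the job: by definition, any three elements $a,b,c$ of $R$ that do not form a $3$-element subalgebra generate all of $\alg{R}$. So if $\alg{T}\leq\alg{R}$ with $|T|\geq 4$, pick any three elements of $T$ that are not a subalgebra (such a triple exists because a $4$-element subset of a Steiner quasigroup cannot have all its $3$-subsets closed — three collinear points plus a fourth point off every line through pairs would be needed, and a quick count shows $|T|=4$ forces some triple to be a non-block); these three elements generate $\alg{R}$ inside $\alg{T}$, forcing $T=R$. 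Hence the only proper subalgebras of $\alg{R}$ have size at most $3$, and none of these is isomorphic to $\alg{S}$ since $|S|\geq 4$.

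I should still handle the exceptional cardinality $9$: Quackenbush's theorem as quoted excludes $n=9$, and indeed the affine plane $\mathrm{AG}(2,3)$ gives a planar Steiner quasigroup of order $9$ that is not simple. But I do not actually need simplicity of $\alg{S}$ in that case: the argument above uses only planarity of $\alg{R}$ to show $f(S)$, if it had size $\geq 4$, must equal $\alg{R}$, and then $|f(S)|=|R|=|S|$ with $\alg{S}\cong\alg{R}$ contradicts non-isomorphism. So the only case where we genuinely invoke simplicity of the \emph{source} is to kill congruences of $\alg{S}$ other than trivial ones; but if $f$ is not injective then $|f(S)|<|S|$, and either $|f(S)|\leq 3$ (done) or $4\leq|f(S)|<|S|$, in which case the subalgebra argument applied to $\alg{R}$ again forces $f(S)=\alg{R}$, whence $|R|=|f(S)|<|S|$ — fine, no contradiction yet, but now $\alg{R}$ is a proper quotient of the planar quasigroup $\alg{S}$ of size $\geq 4$. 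To exclude this last scenario cleanly, it is easiest to simply invoke simplicity of $\alg{S}$ when $|S|\neq 9$, and for $|S|=9$ note that the only proper nontrivial quotients of $\mathrm{AG}(2,3)$ have size at most $3$ (the congruences correspond to partitions into parallel classes), so again $|f(S)|\leq 3$.

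The main obstacle is the bookkeeping around the $n=9$ exception and making the ``no proper subalgebra of size $\geq 4$'' step airtight — specifically the combinatorial claim that any $4$-element subset of a Steiner quasigroup contains a non-block triple. That claim is immediate: a $4$-set all of whose $3$-subsets are blocks would be a subsystem on $4$ points, but every Steiner triple system on $4$ points is impossible (the smallest nontrivial one has $7$ points), so some triple is a non-block. With that in hand the proof is short; I expect the write-up to be three or four sentences.
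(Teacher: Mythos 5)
Your main line of argument --- use Quackenbush's simplicity result to reduce to the case where $f$ is injective, then use planarity of the \emph{target} to show that a planar Steiner quasigroup has no proper subquasigroup on more than $3$ elements, so that an embedded copy of $\alg{S}$ would force $\alg{S}\cong\alg{R}$ --- is exactly the argument the paper is gesturing at when it calls the lemma an ``immediate consequence'' of Quackenbush's theorem (the paper writes out no proof). For $|S|\neq 9$ your write-up is correct. The only small quibble is that the ``every $4$-subset contains a non-block triple'' step is seen most cleanly by noting that two distinct $3$-element subalgebras cannot share two points: if $\{a,b,c\}$ and $\{a,b,d\}$ were both closed, then $ab=c$ and $ab=d$, so $c=d$. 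No counting of Steiner triple systems on $4$ points is needed.

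The genuine problem is your treatment of $|S|=9$. You conclude there that $|f(S)|\leq 3$, and earlier you mark the case ``$|f(S)|\leq 3$'' as done, but the lemma asserts $|f(S)|=1$, and a $3$-element image is not excluded by anything you say: $\mathrm{AG}(2,3)$ is planar yet has a congruence (a parallel class) whose quotient is the $3$-element Steiner quasigroup, and that quotient embeds as a block into any other planar Steiner quasigroup, e.g.\ the Fano quasigroup $\alg{Q}$. The composite map is a homomorphism between non-isomorphic finite planar Steiner quasigroups with image of size $3$. So your patch cannot be made to work; what your analysis actually reveals is that the lemma as literally stated fails at $|S|=9$ and silently carries the hypothesis $|S|\neq 9$ over from Quackenbush's theorem. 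This does no harm to the paper, since the lemma is only ever applied to $\alg{Q}$ (order $7$) and to the quasigroups $\alg{S}_k$ of order $6k+1$ with $k\geq 2$, none of which has order $9$ --- but you should either add that hypothesis explicitly or flag the exception, rather than claim to have handled it.
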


Let $\alg{Q}$ be the 7-element planar Steiner quasigroup, and for any $m \geq 3$ let
$P_1^m$ be the poset dual to $\Bnalg{m}$, in which the maximal elements
are $a_1, a_2,\dots, a_m$. We define $W_m = P(\alg{Q})\mathbin{\diamondsuit}
P_1^m$ to be the horizontal pasting of $P(\alg{Q})$ and $P_1^m$ identifying two distinct
maximal elements of $P(\alg{Q})$ with $a_1$ and $a_2$, so that the universe
of $W_m$ is $P(\alg{Q})\cup P_1^m$ with $P(\alg{Q})\cap P_1^m = \{a_1,a_2\}$.
Then, the ordering on $W_m$ is $\leq^{W_m}\ =\ \leq^{P(\alg{Q})}\cup
\leq^{P_1^m}$. The construction of $W_m$ is illustrated in Figure~\ref{fig:constr}. 

\begin{figure}
\begin{center}
\begin{tikzpicture}
\node[dot, label=above:$a_2$] (v0) at (6,1.5) {};
\node[dot, label=above:$a_1$] (v1) at (5,1.5) {};
\node[dot, label=above:$c_1$] (v2) at (4,1.5) {};
\node[dot, label=above:$c_2$] (v3) at (3,1.5) {};
\node[dot, label=above:$c_3$] (v4) at (2,1.5) {};
\node[dot, label=above:$c_4$] (v5) at (1,1.5) {};
\node[dot, label=above:$c_5$] (v6) at (0,1.5) {};

\node[dot] (v7) at (6,0) {};
\node[dot] (v8) at (3,0) {};
\node[dot] (v9) at (5,0) {};
\node[dot] (v10) at (4,0) {};
\node[dot] (v11) at (2,0) {};
\node[dot] (v12) at (1,0) {};
\node[dot] (v13) at (0,0) {};

\path[draw,thick] (v0)--(v7) (v1)--(v7) (v2)--(v7); 
\path[draw,thick] (v0)--(v8) (v3)--(v8) (v4)--(v8); 
\path[draw,thick] (v1)--(v9) (v3)--(v9) (v5)--(v9); 
\path[draw,thick] (v2)--(v10) (v3)--(v10) (v6)--(v10); 
\path[draw,thick] (v1)--(v11) (v4)--(v11) (v6)--(v11); 
\path[draw,thick] (v2)--(v12) (v4)--(v12) (v5)--(v12); 
\path[draw,thick] (v0)--(v13) (v5)--(v13) (v6)--(v13); 

\node[] (v14) at (3,-1) {$P(\alg{Q})$};
\end{tikzpicture}  
\hspace{1cm}
\begin{tikzpicture}
\node[dot, label=below:$\bot$] (v0) at (0,0) {};
\node[dot, label=above:${a_1}$] (v1) at (-2,1.5) {};
\node[dot, label=above:${a_2}$] (v2) at (-1,1.5) {};
\node[dot, label=above:${a_3}$] (v3) at (0,1.5) {};
\node[dot, label=above:${a_m}$] (v4) at (2,1.5) {};
\node[] (v5) at (1,1.5) {$\dots$};
\path[draw,thick] (v0)--(v1) (v0)--(v2) (v0)--(v3) (v0)--(v4);
\node[] (v6) at (0,-1) {$P^m_1$};
\end{tikzpicture}

\vspace{1cm}
\begin{tikzpicture}
\node[dot, label=above:$a_2$] (v0) at (6,1.5) {};
\node[dot, label=above:$a_1$] (v1) at (5,1.5) {};
\node[dot, label=above:$c_1$] (v2) at (4,1.5) {};
\node[dot, label=above:$c_2$] (v3) at (3,1.5) {};
\node[dot, label=above:$c_3$] (v4) at (2,1.5) {};
\node[dot, label=above:$c_4$] (v5) at (1,1.5) {};
\node[dot, label=above:$c_5$] (v6) at (0,1.5) {};

\node[dot] (v7) at (6,0) {};
\node[dot] (v8) at (3,0) {};
\node[dot] (v9) at (5,0) {};
\node[dot] (v10) at (4,0) {};
\node[dot] (v11) at (2,0) {};
\node[dot] (v12) at (1,0) {};
\node[dot] (v13) at (0,0) {};

\path[draw,thick] (v0)--(v7) (v1)--(v7) (v2)--(v7); 
\path[draw,thick] (v0)--(v8) (v3)--(v8) (v4)--(v8); 
\path[draw,thick] (v1)--(v9) (v3)--(v9) (v5)--(v9); 
\path[draw,thick] (v2)--(v10) (v3)--(v10) (v6)--(v10); 
\path[draw,thick] (v1)--(v11) (v4)--(v11) (v6)--(v11); 
\path[draw,thick] (v2)--(v12) (v4)--(v12) (v5)--(v12); 
\path[draw,thick] (v0)--(v13) (v5)--(v13) (v6)--(v13);

\node[dot, label=below:$\bot$] (w0) at (7,0) {};
\node[dot] (w1) at (5,1.5) {};
\node[dot] (w2) at (6,1.5) {};
\node[dot, label=above:${a_3}$] (w3) at (7,1.5) {};
\node[dot, label=above:${a_m}$] (w4) at (9,1.5) {};
\node[] (w5) at (8,1.5) {$\dots$};
\path[draw,thick] (w0)--(w1) (w0)--(w2) (w0)--(w3) (w0)--(w4);

\node[] (v14) at (4,-1) {$W_m = P(\alg{Q})\mathbin{\diamondsuit} P^m_1$};
\end{tikzpicture}
\end{center}
\caption{Construction of $P(\alg{Q})\mathbin{\diamondsuit} P^m_1$}
\label{fig:constr}
\end{figure}

Choose a sequence $\{\alg{S}_k\}_{k\in \mathbb{N}}$ of planar Steiner quasigroups so
that $k\geq 2$ and $|S_k| = 6k+1$.
For any $K\subseteq \mathbb{N}\setminus\{0,1\}$ let
$\var{Q}_K^m$ be the quasivariety generated by 
the set $\{\varepsilon(P(\alg{S}_k))\}_{k\in K}\cup \{\varepsilon(W_m)\}$.
We have the following result.

\begin{lemma}\label{lem:techne}
  Let $\{\alg{S}_k\}_{k\in \mathbb{N}}$ be as above, and let $m>3$. Then
\begin{enumerate}  
\item For any $K\subseteq \mathbb{N}\setminus\{0,1\}$ we have
$\var{Q}_K^m \in [\var{Pa}_{m-1},\var{Pa}_{m}^-]$.
\item For any $K_1,K_2\subseteq \mathbb{N}\setminus\{0,1\}$ we have
$K_1\neq K_2\implies \var{Q}_{K_1}^m \neq \var{Q}_{K_2}^m$.  
\end{enumerate}  
\end{lemma}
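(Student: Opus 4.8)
\emph{Strategy.} The whole argument takes place on the dual side. By Lemma~\ref{lem:fin-posets}, a finite algebra $\varepsilon(\mathbb X)$ lies in the quasivariety generated by algebras $\varepsilon(\mathbb P)$ if and only if some finite disjoint union of the $\mathbb P$'s admits a surjective pp-morphism onto $\mathbb X$; dually, $\Bnalg n\leq\varepsilon(\mathbb X)$ iff there is a surjective pp-morphism $\mathbb X\to P_1^n$ (Lemma~\ref{lem:charact-Bn}). I will use freely that pp-morphisms carry maximal elements to maximal elements, that $|\max\up x|$ never increases along a pp-morphism, and that the restriction of a pp-morphism to an up-set is again a pp-morphism. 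The one substantial tool is a \emph{rigidity} observation: a pp-morphism $g\colon P(\alg R)\to P(\alg R')$ restricts on maximal elements to a quasigroup homomorphism $\alg R\to\alg R'$, and---since the minimal elements of $P(\alg R)$ also satisfy $|\max\up u| = 3$---if that homomorphism is constant then so is $g$; by Lemma~\ref{lem:PSQ-simple} the homomorphism is constant whenever $\alg R,\alg R'$ are non-isomorphic planar Steiner quasigroups.

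\emph{Part (1).} By Lemmas~\ref{lem:width} and~\ref{lem:greatest-qvar}, $\var{Pa}_m^-$ is the quasivariety $\mathrm{Mod}(\mathbf{ib}_m,\mathbf{qb}_m)$, so $\var{Q}_K^m\subseteq\var{Pa}_m^-$ follows once $\varepsilon(P(\alg{S}_k))$ and $\varepsilon(W_m)$ are shown to satisfy $\mathbf{ib}_m$ and $\mathbf{qb}_m$. Since $\var{Pa}_m = Q(\Bnalg m)$ (Lemma~\ref{lem:gen-as-quasiv}), satisfaction of $\mathbf{ib}_m$ amounts to exhibiting, for $\mathbb X\in\{P(\alg{S}_k),W_m\}$, a surjective pp-morphism onto $\mathbb X$ from a disjoint union of copies of $P_1^m = \delta(\Bnalg m)$: use one copy per minimal element $u$ of $\mathbb X$ and send its $m$ maximal elements onto $\max\up u$ (possible since $|\max\up u|\in\{1,3,m\}$). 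This is a pp-morphism whose image is all of $\mathbb X$, because every point of a Steiner system lies on a line and every maximal element of $W_m$ lies above a minimal one. For $\mathbf{qb}_m$ I rule out a surjective pp-morphism onto $P_1^m$: an element $x$ sent to the bottom of $P_1^m$ must satisfy $|\max\up x|\geq m$, which no element of $P(\alg{S}_k)$ does and which forces the preimage of that bottom to be $\{\bot\}$ in $W_m$; but then the map is a bijection of $\{a_1,\dots,a_m\}$ onto the maximal elements of $P_1^m$ while being constant on the three points of each line of $\alg Q$, hence constant on all of $Q$ (any two points of $\alg Q$ share a line), contradicting injectivity. For the inclusion $\var{Pa}_{m-1}\subseteq\var{Q}_K^m$, by Lemma~\ref{lem:gen-as-quasiv} it suffices that $\Bnalg{m-1}\in\var{Q}_K^m$; since $\varepsilon(W_m)$ is always a generator, I only need a surjective pp-morphism $W_m\to P_1^{m-1}$, obtained by collapsing $P(\alg Q)$ to a single maximal element of $P_1^{m-1}$, sending $\bot$ to the bottom, and spreading $a_3,\dots,a_m$ over the remaining maximal elements.

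\emph{Part (2), and the main obstacle.} By symmetry fix $k\in K_1\setminus K_2$. Then $\varepsilon(P(\alg{S}_k))\in\var{Q}_{K_1}^m$ (it is one of the defining generators), so it suffices to show $\varepsilon(P(\alg{S}_k))\notin\var{Q}_{K_2}^m$; by Lemma~\ref{lem:fin-posets} this means ruling out a surjective pp-morphism $f\colon\mathbb P_1\uplus\dots\uplus\mathbb P_n\to P(\alg{S}_k)$ with each $\mathbb P_i\in\{W_m\}\cup\{P(\alg{S}_j):j\in K_2\}$. Restricting $f$ to a summand and invoking rigidity: a $P(\alg{S}_j)$-summand ($j\neq k$, hence $\alg{S}_j\not\cong\alg{S}_k$) maps to a single point of $\alg{S}_k$, and in a $W_m$-summand the up-set $P(\alg Q)$ maps to a single point of $\alg{S}_k$ (here $|\alg Q| = 7\neq 6k+1 = |\alg{S}_k|$). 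The remaining step is the delicate one: from this one must derive a contradiction with surjectivity, namely that the union of the summand images cannot exhaust $P(\alg{S}_k)$. The difficulty is that $W_m$ is less rigid than the $P(\alg{S}_j)$---its extra minimal element $\bot$ is not pinned down by the $P(\alg Q)$-part, so a priori a $W_m$-summand can reach minimal elements of $P(\alg{S}_k)$ through $f(\bot)$. One has to determine exactly which subposets of $P(\alg{S}_k)$ occur as pp-morphic images of $W_m$, and check that these---together with the point-images coming from the $P(\alg{S}_j)$---leave part of $P(\alg{S}_k)$ uncovered. This is where the numerology $|S_k| = 6k+1$ and the precise shape of $W_m$ must be brought to bear, and it is the step I expect to demand the most work.
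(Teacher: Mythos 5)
Your part (1) is correct and follows the paper's route; you even make explicit the verification that the generators satisfy $\mathbf{ib}_m$ and the exclusion argument for $\mathbf{qb}_m$ (preimage of the bottom of $P_1^m$ forced to be $\bot$, then constancy of $h$ on each line of $\alg{Q}$ versus injectivity on $\{a_1,\dots,a_m\}$), which is exactly the paper's argument with slightly more detail. Your setup for part (2) --- reduction via Lemma~\ref{lem:fin-posets} to a surjective pp-morphism onto $P(\alg{S}_k)$, restriction to summands, and the rigidity coming from Lemma~\ref{lem:PSQ-simple} --- also matches the paper. But part (2) is not finished: you stop at ``one has to determine exactly which subposets of $P(\alg{S}_k)$ occur as pp-morphic images of $W_m$,'' and that is precisely the step that carries the content. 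The paper's own finish is short and needs no numerology: once $h$ is constant on the maximal part of each rigid summand, every minimal element of a $P(\alg{S}_j)$-summand and every line of $\alg{Q}$ is sent to a \emph{maximal} element of $P(\alg{S}_k)$ (its set of maximal successors has a one-point image), and maximal elements go to maximal elements; hence the only element of the whole disjoint union that can land on a minimal element of $P(\alg{S}_k)$ is $\bot$. Since $P(\alg{S}_k)$ has at least two minimal elements, all of which must be hit, surjectivity fails. That pigeonhole on minimal elements is the idea missing from your write-up.

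You should be aware, however, that your hesitation at this point is not misplaced: the pigeonhole just described is conclusive only if the disjoint union contains a \emph{single} copy of $W_m$, which is how the paper writes the map $h$. Lemma~\ref{lem:fin-posets} allows repeated summands (quasivariety generation is closed under finite powers), and a copy of $W_m$ does admit a pp-morphism onto the four-element upset $\up{t}$ of $P(\alg{S}_k)$ for any line $t$: collapse all of $P(\alg{Q})$ to one point of $t$, send $\bot$ to $t$, and spread $a_3,\dots,a_m$ over the remaining two points of $t$ (this uses $m\geq 4$; the same construction shows $\Bnalg{3}\leq\varepsilon(W_m)$). Taking one such copy per line of $\alg{S}_k$ yields a surjective pp-morphism from a disjoint union of copies of $W_m$ onto $P(\alg{S}_k)$, so the classification of images you propose does not, by itself, produce the desired contradiction. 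If you pursue this route you must either justify a reduction to a single copy of $W_m$ or find a genuinely different obstruction; as far as I can see this issue is not addressed by the argument in the paper either, so it deserves explicit attention rather than being absorbed into ``the step that demands the most work.''
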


\begin{proof}
For (1) we first show that $\Bnalg{m-1}\in S(\varepsilon(W_m))$. Let the maximal
elements of $P_1^{m-1}$ be $a_1',\dots, a_{m-1}'$ and let the smallest element
be $\bot'$. The map $h: W_m\to P_1^{m-1}$ given by
$$
h(u) = \begin{cases}
a'_1 & \text{ if } u\in P(\alg{Q})\\
a'_{i-1} & \text{ if } u = a_i \text{ and } 3\leq i\leq m\\
\bot'   &  \text{ if } u = \bot 
\end{cases}
$$
is a surjective pp-morphism. Hence $\Bnalg{m-1}\in S(\varepsilon(W_m))$.

Next we show that $\var{Q}_K^m\models\mathbf{qb}_m$. By Lemma~\ref{lem:charact-Bn} 
it suffices to show that  
$\Bnalg{m}\notin \var{Q}_K^m$. Suppose the contrary.
Then, by Lemma~\ref{lem:fin-posets} and duality there exists a finite
$K_0\subseteq K$ and a surjective pp-morphism
$$
h\colon \biguplus_{k\in K_0} P(\alg{S}_k)\uplus W_m \to P_1^m.
$$
Then $h(\max(\biguplus_{k\in K_0} P(\alg{S}_k)\uplus W_m)) = \max(P_1^m) =
\{a_1,\dots, a_m\}$, and moreover for some
$x\in \biguplus_{k\in K_0} P(\alg{S}_k)\uplus W_m$ we have
$h(x) = \bot$. If $x\in \biguplus_{k\in K_0} P(\alg{S}_k)\uplus P(\alg{Q})$, then
$|\max\up{x}| \leq 3$, but $|\max\up{h(x)}| > 3$ giving a contradiction.
Therefore $x = \min(P^m_1)$. It follows that
$h(\max\up{x}) = h(\{a_1,\dots, a_m\}) = \max(P_1^m)$, and therefore
$h(a_1)\neq h(a_2)$. Now, by construction of $W_m$ we have  that
$a_1,a_2\in\max(P(\alg{Q}))$, so using the fact that $\max(P(\alg{Q}))$ 
is $\alg{Q}$, we take $t\deq \{a_1,a_2, a_1a_2\}$.  Then
$t\in \min(P(\alg{Q}))$ and $\max\up{t} = \{a_1,a_2, a_1a_2\}$, so it follows that
$|h(\max\up{t})|\in\{2,3\}$.  But $h(t)$ is either a maximal element or
$h(t) = \min(P^m_1)$; in the former case $|\up{h(\max(t))}| = 1$, and in the
latter $|\up{h(\max(t))}| = m > 3$, yielding a contradiction in either case.
This proves (1).

For (2) let $K_1, K_2\subseteq \mathbb{N}\setminus\{0,1\}$, with $K_1\neq
K_2$. Without loss of generality, take $r\in K_1\setminus K_2$. We will show
that $\varepsilon(P(\alg{S}_r)\notin \var{Q}_{K_2}^m$. Suppose the contrary;
then as in the proof of (1) there exists
$K_0\subseteq K_2$ and a surjective pp-morphism
$$
h\colon \biguplus_{k\in K_0} P(\alg{S}_k)\uplus W_m \to P(\alg{S}_r).
$$
It is immediate that if $f\colon P_1\uplus P_2 \to G$ is a pp-morphism, then
$f|_{P_i}\colon P_i\to G$ is also a pp-morphism, for $i=1,2$. Therefore,
$h|_{P(\alg{S}_k)}\colon P(\alg{S}_k) \to P(\alg{S}_r)$ is a pp-morphism for every
$k\in K_0$, and $h|_{W_m}\colon W_m \to P(\alg{S}_r)$ is a pp-morphism as well. 
Moreover, it is easy to check that $h|_{P(\alg{Q})}: P(\alg{Q}) \to P(\alg{S}_r)$
is also a pp-morphism. In particular,
$h(\alg{S}_k) = h(\max(P(\alg{S}_k))) \subseteq \max(P(\alg{S}_r)) = \alg{S}_r$
and $h(\alg{Q})\subseteq \alg{S}_r$. 
By construction of $P(\alg{S}_k)$ and $P(\alg{Q})$ it follows that the maps 
$h|_{\alg{S}_k}\colon \alg{S}_k\to \alg{S}_r$ and
$h|_{\alg{Q}}\colon \alg{Q}\to \alg{S}_r$ are quasigroup homomorphisms.
Hence, by Lemma~\ref{lem:PSQ-simple} we get  $|h(\alg{S}_k)| =
1 = |h(\alg{Q})|$. 

Next, take $a,b,\in \alg{S}_r$ with $a\neq b$. Then
$t = \{a,b, ab\} \in \min(P(\alg{S}_r))$, and since $h$ is a surjective
pp-morphism there exists some $x\in \biguplus_{k\in K_0} P(\alg{S}_k)\uplus W_m$
such that $h(x) = t$. Moreover, by pp-morphism conditions
$\max(\up{t}) = h(\max(\up{x}))$, and
$|\max(\up{t})| = 3$. Now, if $x\in \min(\biguplus_{k\in K_0} P(\alg{S}_k))
\uplus \min(P(\alg{Q}))$, then we would have
$|h(\max(\up{x}))| = 1$ contradicting $|\max(\up{t})| = 3$. Hence,
$x$ is the smallest element of $P^m_1$, that is, $x = \bot$. Pick any element
$t'\in\min P(\alg{S}_r)$ with $t'\neq t$ (there are many such elements: it
suffices to take $c\notin \{a,b,ab\}$ and let $t' = \{a,c,ac\}$). 
Repeating the reasoning above, we conclude that $h(\bot) = t'$,
which yields a
contradiction and ends the proof. 
\end{proof}  

\begin{theorem}\label{thm:cont-everywhere}
For any $m\geq 3$, the interval\/ $[\var{Pa}_{m-1},\var{Pa}^-_{m}]$ in the lattice of
subquasivarieties of\/ $\var{Pa}$  is of cardinality continuum.
\end{theorem}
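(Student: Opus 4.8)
The theorem is a conjunction of an upper and a lower bound, and I would dispose of the upper bound first: $\var{Pa}$ is of finite signature, so there are only countably many quasi-identities over it, every subquasivariety of $\var{Pa}$ is a model class of some set of quasi-identities, and hence $L^q(\var{Pa})$ has at most $2^{\aleph_0}$ members; a fortiori so does every interval in it. So all the work is in the lower bound, i.e.\ in exhibiting continuum many quasivarieties inside $[\var{Pa}_{m-1},\var{Pa}^-_m]$. For $m\ge 4$ this is exactly Lemma~\ref{lem:techne}: with the sequence $\{\alg{S}_k\}$ fixed as there, part~(1) places each $\var{Q}^m_K$ (for $K\subseteq\mathbb{N}\setminus\{0,1\}$) in the interval, part~(2) makes $K\mapsto\var{Q}^m_K$ injective, and $\mathbb{N}\setminus\{0,1\}$ has continuum many subsets.

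The case $m=3$ is the real content, since the proof of Lemma~\ref{lem:techne}(1) uses $m>3$ twice --- to pin the preimage of $\min P^3_1$ down to $\min P^3_1$, and in the final step where it compares $|\max\up{t}|\le 3$ with $m$. The plan is to keep the same family: for $K\subseteq\mathbb{N}\setminus\{0,1\}$ put $\var{Q}^3_K=Q\bigl(\{\varepsilon(P(\alg{S}_k))\}_{k\in K}\cup\{\varepsilon(W_3)\}\bigr)$ with $W_3=P(\alg{Q})\mathbin{\diamondsuit}P^3_1$, and verify three things. (i) $\var{Pa}_2\subseteq\var{Q}^3_K$: the collapsing map $W_3\to P^2_1$ used in the proof of Lemma~\ref{lem:techne}(1) is still a surjective pp-morphism, so $\Bnalg{2}\in S(\varepsilon(W_3))$, and Lemma~\ref{lem:gen-as-quasiv} finishes it. (ii) $\var{Q}^3_K\subseteq\var{Pa}_3$: every element $u$ of each $P(\alg{S}_k)$, of $P(\alg{Q})$, and of $W_3$ has $|\max\up{u}|\le 3$, and a finite poset with this property is dual to a member of the variety $\var{Pa}_3$. (iii) $\var{Q}^3_K\subseteq\mathrm{Mod}(\mathbf{qb}_3)$, i.e.\ $\Bnalg{3}\notin\var{Q}^3_K$: by Lemma~\ref{lem:fin-posets} and duality this amounts to excluding a surjective pp-morphism $h\colon\biguplus_{k\in K_0}P(\alg{S}_k)\uplus W_3\to P^3_1$ for finite $K_0\subseteq K$. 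Together (i)--(iii) give $\var{Q}^3_K\in[\var{Pa}_2,\var{Pa}^-_3]$, and injectivity of $K\mapsto\var{Q}^3_K$ follows verbatim from the proof of Lemma~\ref{lem:techne}(2), which nowhere uses $m>3$; so the interval has continuum many members.

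For step (iii) I would argue as follows. Pp-morphisms send maximal elements to maximal elements, so the restriction of $h$ to the vertex set of any $P(\alg{S}_k)$ (respectively to the vertex set $\alg{Q}$ of $P(\alg{Q})\subseteq W_3$) is a $3$-colouring of that Steiner triple system by $\{a_1,a_2,a_3\}=\max P^3_1$; and the pp-condition at each minimal element (block) $t$ forces $h(\max\up{t})=\max\up{h(t)}$, which is a singleton if $h(t)$ is maximal and all of $\{a_1,a_2,a_3\}$ if $h(t)=\min P^3_1$, so every block is monochromatic or rainbow (carries all three colours). Now pick any $x$ with $h(x)=\min P^3_1$ and take it minimal, so that $x$ is a minimal element of the source: if $x$ is a block of some $P(\alg{S}_k)$ or a block of the Fano poset $P(\alg{Q})\subseteq W_3$, the corresponding $3$-colouring hits $\min P^3_1$ on a block and is therefore non-constant; and if $x=\bot\in W_3$, the pp-condition at $\bot$ forces $h$ to be injective on $\{a_1,a_2,a_3\}$, where $a_1,a_2\in\alg{Q}$, so again $h|_{\alg{Q}}$ is non-constant. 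Thus in every case some Steiner triple system on $6k+1\ge 13$ or on $7$ points would carry a non-constant $3$-colouring in which every block is monochromatic or rainbow --- and this is impossible: if all three colours occur, then through any point $p$ of colour $1$ the blocks are either monochromatic (pairing two colour-$1$ points) or rainbow (pairing a colour-$2$ point with a colour-$3$ point), giving a bijection between the colour-$2$ and colour-$3$ points, so $|C_2|=|C_3|$, and by symmetry $|C_1|=|C_2|=|C_3|$, impossible because neither $6k+1$ nor $7$ is divisible by $3$; and if exactly two colours occur, every block must be monochromatic (a rainbow block needs three colours), but then the block through two differently coloured points is not monochromatic. This contradiction completes (iii).

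The main obstacle is precisely this last combinatorial input for $m=3$: the fact that a finite Steiner triple system on more than seven points, and the Fano plane itself, admit no non-constant $3$-colouring with every block monochromatic or rainbow. The divisibility facts $3\nmid 6k+1$ and $3\nmid 7$ are what make it go, and they are exactly what replaces the cardinality inequality $|\max\up{t}|\le 3<m$ that was available when $m\ge 4$. Everything else is either the one-line cardinality bound or a direct invocation of Lemma~\ref{lem:techne}.
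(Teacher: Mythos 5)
Your proposal is correct, and for the one case where it matters it takes a genuinely different route from the paper. For $m\geq 4$ you and the paper do the same thing: invoke Lemma~\ref{lem:techne} for the lower bound (your explicit remark that the upper bound is the trivial count of quasi-identities over a finite signature is left implicit in the paper). For $m=3$, however, the paper does not extend the construction at all: it simply cites Gr\"atzer \emph{et al.}~\cite{GLQ80} for the uncountability of $(\var{Pa}_3]$ and then transfers this to $[\var{Pa}_2,\var{Pa}_3^-]$ via the splitting of Lemma~\ref{lem:greatest-qvar} and $|L^q(\var{Pa}_2)|=4$. You instead correctly diagnose the two places where the proof of Lemma~\ref{lem:techne}(1) needs $m>3$ (both reduce to comparing $|\max\up{t}|\leq 3$ with $m$) and repair them for $m=3$ with a self-contained combinatorial substitute: a surjective pp-morphism onto $P^3_1$ would induce on some Steiner triple system of order $7$ or $6k+1$ a non-constant $3$-colouring in which every block is monochromatic or rainbow, and the bijection-through-a-point argument shows this forces the three colour classes to be equinumerous, contradicting $3\nmid 7$ and $3\nmid 6k+1$; your two-colour and degenerate cases are also handled correctly, as are the verifications that $\Bnalg{2}\in S(\varepsilon(W_3))$ and that the injectivity argument of Lemma~\ref{lem:techne}(2) is $m$-independent. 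What your approach buys is a uniform, citation-free proof covering all $m\geq 3$ with the single family $\var{Q}^m_K$, and it sidesteps the slightly delicate step in the paper's $m=3$ reduction (namely, checking that joining the \cite{GLQ80} quasivarieties with $\var{Pa}_2$ does not collapse them); what it costs is the extra combinatorial lemma on colourings of Steiner triple systems, which the paper avoids by outsourcing the base case.
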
  

\begin{proof}
For $m = 3$, it was shown in~\cite{GLQ80}. For $m>3$, it is   
immediate by Lemma~\ref{lem:techne}.
\end{proof}   

\section{Structural (in)completeness}

A deductive system $\mathcal{S}$ is \emph{structurally complete}
if all its admissible rules are derivable. 
The notion of structural completeness was introduced by Pogorzelski
in~\cite{Pog71}, and investigated by many authors in various forms and
modifications. Pogorzelski and Wojtylak~\cite{PW08} is a monography of the subject.
Since deductive systems correspond, at least roughly, to quasivarieties, so an
analogous algebraic notion naturally arose, namely, a quasivariety $\mathcal{Q}$
is structurally complete if every quasiequation true in the countably generated
free algebra in $\mathcal{Q}$ remains true throughout $\mathcal{Q}$.
This was first explicitly formulated, as far the authors are aware, in
Bergman~\cite{Ber91}.

It is obvious that a  
quasivariety $\mathcal{Q}$ is structurally complete if and only if $\mathcal{Q}$
is generated by its free algebra on countably many generators, or equivalently,
by its finitely generated free algebras. Hence, if $\mathcal{V}$ is a variety, then
$\mathcal{V}$ is structurally complete if and only if $\mathcal{V}$ is generated by its
free algebras as a quasivariety.  

Algebraic investigations of structural completeness have recently (from the
authors' point of view) seen a revival, see for instance in~\cite{ORvA},
\cite{CM09}, \cite{RS16}, \cite{BM23}.
Certain weakenings of structural completeness were
investigated in~\cite{DS16}, \cite{Wro09}, and, in a different direction
in~\cite{Cit16}, \cite{Cit20}. For several recent results, and also excellent
surveys of algebraic methods in the area, we refer the reader to~\cite{AUg24}
and~\cite{AC24}.

For varieties of p-algebras, Theorem~\ref{thm:free-algs} has the
following immediate corollary, also stated in~\cite{KS24}.

\begin{cor}\label{cor:str-incompl}
The variety $\var{Pa}$ and varieties $\var{Pa}_n$, for every $n>2$, are not
structurally complete. 
\end{cor}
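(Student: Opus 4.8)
The plan is to derive Corollary~\ref{cor:str-incompl} directly from Theorem~\ref{thm:free-algs} together with the structural interpretation recalled just above. Recall that a variety $\mathcal{V}$ is structurally complete iff it is generated, as a quasivariety, by its free algebras. So to show $\var{Pa}$ and $\var{Pa}_n$ (for $n>2$) are \emph{not} structurally complete, it suffices to exhibit a quasiequation that holds in all free algebras of the variety but fails somewhere in the variety.

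The natural candidate is $(\mathbf{qb}_3)$. By Theorem~\ref{thm:free-algs}, for $n>2$ (so $n\geq 3$) every free algebra $\mathbf{F}_n(X)$ with $|X|\geq 2$ satisfies $(\mathbf{qb}_3)$, and likewise $\mathbf{F}(X)$, free in $\var{Pa}$, satisfies it. The one-generated and zero-generated free algebras are finite Boolean-like algebras; more to the point, the quasivariety generated by \emph{all} finitely generated free algebras is the same as that generated by the countably generated one, and $\mathbf{F}_n(\omega)$ (respectively $\mathbf{F}(\omega)$) certainly has at least two generators, hence satisfies $(\mathbf{qb}_3)$. Therefore the quasivariety generated by the free algebras of $\var{Pa}_n$ (resp.\ $\var{Pa}$) is contained in $\mathrm{Mod}(\mathbf{qb}_3)$.

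On the other hand, for $n\geq 3$ the algebra $\Bnalg{3}$ lies in $\var{Pa}_n$ (since $\var{Pa}_3\subseteq\var{Pa}_n$), and by Lemma~\ref{lem:charact-Bn} it does \emph{not} satisfy $(\mathbf{qb}_3)$ — indeed $\Bnalg{3}\not\models\mathbf{qb}_3$ because $\Bnalg{3}\leq\Bnalg{3}$. Likewise $\Bnalg{3}\in\var{Pa}$. So $(\mathbf{qb}_3)$ holds throughout the free algebras but fails in a member of the variety, witnessing the failure of structural completeness. Concretely: the quasivariety generated by the free algebras is contained in $\mathrm{Mod}(\mathbf{qb}_3)$, which does not contain $\Bnalg{3}$, whereas $\var{Pa}_n$ (and $\var{Pa}$) do; hence these varieties are strictly larger than the quasivariety generated by their free algebras, i.e.\ they are not structurally complete.

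I do not anticipate a genuine obstacle here; the corollary is essentially a one-line consequence of the preceding theorem once the structural-completeness criterion is recalled. The only point requiring a word of care is the reduction from ``generated by finitely generated free algebras'' to ``$\mathbf{F}_n(\omega)$ satisfies $(\mathbf{qb}_3)$'': since quasiequations have finitely many variables, a quasiequation fails in $\mathbf{F}_n(\omega)$ iff it fails in some finitely generated free subalgebra, so it is enough to know $(\mathbf{qb}_3)$ holds in $\mathbf{F}_n(k)$ for every $k$; for $k\geq 2$ this is Theorem~\ref{thm:free-algs}, and for $k\leq 1$ the free algebra is Boolean (it satisfies even $\mathbf{qb}_1$, a fortiori $\mathbf{qb}_3$, by the remark following Lemma~\ref{lem:charact-Bn}). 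That closes the argument.
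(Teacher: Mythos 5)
Your argument is correct and is exactly the one the paper intends (the paper presents the corollary as an immediate consequence of Theorem~\ref{thm:free-algs}): the countably generated free algebras satisfy $(\mathbf{qb}_3)$, hence the quasivariety they generate lies in $\mathrm{Mod}(\mathbf{qb}_3)$, while $\Bnalg{3}$ belongs to $\var{Pa}_n$ for $n\geq 3$ and to $\var{Pa}$ but fails $(\mathbf{qb}_3)$. One side remark is inaccurate --- the one-generated free p-algebra is not Boolean (e.g.\ $\Bnalg{1}$ is one-generated) --- but this is harmless, since the small free algebras embed into $\mathbf{F}_n(2)$ and quasiequations are preserved under subalgebras.
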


On the other hand, Mints~\cite{Min72} proved (the logical equivalent of) the following.

\begin{theorem}[Mints]\label{thm:mints}
Let $\varphi$ be a quasiequation of the form
$$
t_1=1\mathbin{\&} \dots \mathbin{\&} t_k=1 \implies s= 1,
$$
over $n$ variables, in the language of p-algebras, but considered as a
quasiequation over all Heyting algebras. Let $\alg{H}(n)$ be the $n$-generated
free Heyting algebra, and let $\var{HA}$ be the variety of Heyting algebras.
Then the following holds:
$$
\alg{H}(n)\models\varphi\quad \text{implies}\quad
\var{HA}\models\varphi.
$$
\end{theorem}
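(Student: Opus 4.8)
The plan is to prove the contrapositive, translated into intuitionistic propositional logic $\mathrm{IPC}$. Reading $x^{*}$ as $\neg x=(x\to 0)$, view $\varphi$ as the single-conclusion rule $t_{1},\dots,t_{k}/s$ over the variables $p_{1},\dots,p_{n}$. By the algebraic deduction theorem, $\var{HA}\models\varphi$ is equivalent to $\vdash_{\mathrm{IPC}}\bigwedge_{i}t_{i}\to s$, i.e.\ to derivability of the rule, while $\alg{H}(n)\models\varphi$ is equivalent to: for every substitution $\sigma$ of $p_{1},\dots,p_{n}$ by formulas over $p_{1},\dots,p_{n}$, if $\vdash\sigma t_{i}$ for all $i$ then $\vdash\sigma s$ — one uses here that an assignment into a free algebra is a substitution and that the premises and the conclusion all have the form ``$(-)=1$''. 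So it suffices to show: \emph{if} $\not\vdash_{\mathrm{IPC}}\bigwedge_{i}t_{i}\to s$, \emph{then} there is such a $\sigma$ with $\vdash\sigma t_{i}$ for every $i$ but $\not\vdash\sigma s$.

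Assume $\not\vdash\bigwedge_{i}t_{i}\to s$. Since this formula has at most $n$ variables it is already refuted in $\alg{H}(n)$ under the identity assignment; dualising, there is a world $w_{0}$ of the dual Kripke model of $\alg{H}(n)$ (the universal $n$-model) with $w_{0}\Vdash t_{i}$ for all $i$ and $w_{0}\not\Vdash s$. The task is then to manufacture a single substitution $\sigma$, over the variables $p_{1},\dots,p_{n}$ only, such that for \emph{every} implication-free formula $\psi(p_{1},\dots,p_{n})$,
\[
  \vdash_{\mathrm{IPC}}\sigma\psi\quad\Longleftrightarrow\quad w_{0}\Vdash\psi .
\]
Granting this, instantiating $\psi:=t_{i}$ gives $\vdash\sigma t_{i}$ for each $i$ and $\psi:=s$ gives $\not\vdash\sigma s$, which is exactly what is needed. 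Algebraically $\sigma$ is an endomorphism of $\alg{H}(n)$; composed with the quotient $\alg{H}(n)\twoheadrightarrow\alg{H}(n)/\langle\bigwedge_{i}t_{i}=1\rangle$ it is a Heyting homomorphism from the algebra presented by $\bigwedge_{i}t_{i}=1$ back into $\alg{H}(n)$ which does not send the value of $s$ to $1$, and the existence of such a partial section is really what the theorem asserts.

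Constructing $\sigma$ is the heart of the argument, and it is here that implication-freeness of $t_{i},s$ is used: $\psi$ is built from $\wedge,\vee,\neg,0,1$ only, whereas $\sigma\psi$ may freely contain $\to$. The idea is to relativise everything to the generated submodel ${\uparrow}w_{0}$ by a guarded choice indexed by the worlds of that model: rewrite the ``world descriptions'' within the $n$ available variables (possible because, after a bisimulation reduction, the relevant finite part of ${\uparrow}w_{0}$ sits as a generated submodel of the universal $n$-model), guard them by a formula saying ``the atoms forced here are those forced at $w_{0}$'', and then verify the displayed equivalence by induction on $\psi$. The $\wedge$- and $\vee$-clauses are routine bookkeeping. \emph{The main obstacle is the $\neg$-clause}: one must check that $\sigma(\neg\chi)$ becomes provable exactly when $\chi$ is refuted at $w_{0}$, and this is the single place where the absence of $\to$ inside $\psi$ is indispensable — were arbitrary $\psi$ allowed, no such $\sigma$ could exist, since $\mathrm{IPC}$ is not structurally complete (e.g.\ the Mints rule is admissible but not derivable). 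Getting the negation clause right while keeping $\sigma$ within the original $n$ variables is the delicate technical core.
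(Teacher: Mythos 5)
Your overall strategy --- contrapositive, pass to the dual model of $\alg{H}(n)$, and exhibit a substitution $\sigma$ within the $n$ variables witnessing failure of admissibility --- is a legitimate route to Mints' theorem, but the proof as written has a genuine hole at exactly the point you yourself flag as ``the heart of the argument'': the substitution $\sigma$ is never defined, and the equivalence $\vdash\sigma\psi\iff w_{0}\Vdash\psi$ is never verified. This is not a routine omission. The inductive invariant you propose records only what is forced \emph{at} $w_{0}$, and that is too weak to push through the negation clause: $w_{0}\Vdash\neg\chi$ is a statement about all worlds in ${\uparrow}w_{0}$, while $\vdash\neg\sigma\chi$ is a statement about all models of $\sigma\chi$, so relating them requires knowing that $\sigma\chi$ is consistent precisely when $\chi$ is forced somewhere above $w_{0}$ --- information the stated induction hypothesis does not supply. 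A workable invariant has to assert something like ``$\sigma\psi$ is IPC-equivalent to the relativisation of $\psi$ to ${\uparrow}w_{0}$'' uniformly in $\psi$, and arranging this with formulas in only the original $n$ variables (so that the assignment lands in $\alg{H}(n)$ rather than $\alg{H}(\omega)$) is precisely the technical content of Mints' argument. As submitted, the proposal reduces the theorem to an unproven lemma that carries its full weight. (The set-up is fine: the translation of $\alg{H}(n)\models\varphi$ into closure under $n$-variable substitutions, and the implicit use of the disjunction property for the $\vee$-clause, are both correct.)

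For contrast, the paper does not prove Theorem~\ref{thm:mints} semantically at all. It proves the p-algebra analogue, Theorem~\ref{thm:special-str-compl}, by duality: a failure of $\varphi$ in $\var{Pa}$ already occurs in $\Bnalg{2^n}$, the largest $n$-generated subdirectly irreducible, and the natural surjection $h\colon\alg{F}(n)\twoheadrightarrow\Bnalg{2^n}$ satisfies $h^{-1}(\{1\})=\{1\}$ by the structure of $\delta(\alg{F}(n))$ established in Lemma~\ref{lem:under-each}, so the falsifying assignment lifts to $\alg{F}(n)$. It then transfers the statement to Heyting algebras via reducts in Lemma~\ref{lem:our-implies-mints}. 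If you want to complete your route you must actually carry out the Prucnal/Mints-style guarded substitution; alternatively, the paper's algebraic detour avoids the substitution entirely.
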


Corollary~\ref{cor:str-incompl} and Theorem~\ref{thm:mints} at first sight 
may seem to contradict each other. It is not the case, since
over p-algebras, unlike in Heyting algebras, an identity $t = s$ is in general
\emph{not} equivalent to any identity of the form $r = 1$. We will now show that
Theorem~\ref{thm:mints} can be derived from our description of free p-algebras.

\begin{theorem}\label{thm:special-str-compl}
Let $\varphi$ be a quasiequation of the form
$$
t_1=1\mathbin{\&} \dots \mathbin{\&} t_k=1 \implies s= 1,
$$
over $n$ variables, in the language of p-algebras. The following holds:
$$
\alg{F}(n)\models\varphi\quad \text{implies}\quad
\var{Pa}\models\varphi.
$$
\end{theorem}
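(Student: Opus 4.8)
The plan is to reduce the claim about $\mathbf{F}(n)$ to a claim about the free algebras $\mathbf{F}_m(n)$ in the subvarieties $\var{Pa}_m$, and then to exploit the structure given by Lemma~\ref{lem:under-each}. First I would recall that $\var{Pa} = \bigcup_m \var{Pa}_m$ as a variety (by Theorem~\ref{thm:var-descr}), so $\var{Pa}\not\models\varphi$ iff some subdirectly irreducible $\Bnalg{k}$ falsifies $\varphi$, iff some $\var{Pa}_m\not\models\varphi$. Thus it suffices to show: if $\mathbf{F}_m(n)\models\varphi$ for all sufficiently large $m$, then $\var{Pa}_m\models\varphi$ for all $m$. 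Since $\varphi$ is a quasiequation of the special ``$=1$'' shape, this is really a statement about which $1$-preserving congruences can be detected in the free algebra, so the point will be to track the element $1$ and its interaction with the operation ${}^*$.

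The key step is Lemma~\ref{lem:under-each}(4): for $m\geq 2^n$ we have $\mathbf{F}_m(n) = \mathbf{F}(n)$, and moreover $1^{\mathbf{F}(n)}$ is join-irreducible, so $\{1\}$ is a prime filter of $\mathbf{F}(n)$. This means that in the dual poset $\delta(\mathbf{F}(n))$ there is a top point $\top$ corresponding to the filter $\{1\}$, sitting strictly above everything else, and $\up{\top}=\{\top\}$. I would use this to argue that a term $t$ in $n$ variables evaluates to $1$ in $\mathbf{F}(n)$ precisely when a certain ``local'' condition holds at $\top$, and that the subalgebra structure near $\top$ is rigid enough that the same local condition can be transferred to an arbitrary $\Bnalg{k}$. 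Concretely, given $\mathbf{F}(n)\models\varphi$ but $\Bnalg{k}\not\models\varphi$ for some $k$, pick a falsifying valuation $\bar a$ in $\Bnalg{k}$; I want to ``lift'' it to a valuation in $\mathbf{F}(n)$ (using freeness to choose images of the generators, then composing with the quotient map $\mathbf{F}(n)\to\Bnalg{k}$ induced by $\bar a$) in a way that keeps the premises $t_i=1$ true and the conclusion $s=1$ false — but of course the naive lift only gives a homomorphism $\mathbf{F}(n)\to\Bnalg{k}$, which preserves $=1$ in the wrong direction. So instead I would argue contrapositively at the level of the dual: a failure of $\varphi$ in $\Bnalg{k}$ produces, via Lemma~\ref{lem:fin-posets} and duality, a surjective pp-morphism from a disjoint union of copies of $\delta(\mathbf{F}(n))$ onto $\delta(\Bnalg{k})$ witnessing that $\Bnalg{k}\in Q(\mathbf{F}(n))$ would be needed — the real content is that the ``$=1$'' form of $\varphi$ makes falsifiability depend only on the top part of the posets involved, where $\delta(\mathbf{F}(n))$ already looks like $\delta(\Bnalg{2^n})$.

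The main obstacle, and the step that needs the most care, is exactly this transfer: a general quasiequation is preserved by $SP_U$ but not reflected down to subdirectly irreducibles, so one genuinely needs the special shape of $\varphi$ (all equations of the form $r=1$) together with join-irreducibility of $1$ in $\mathbf{F}(n)$. The right framing is probably: for an equation $r=1$ and a p-algebra $\alg{A}$, the set $\{(\bar a): r^{\alg A}(\bar a)=1\}$ is an ``upward-closed'' condition definable by the behaviour at prime filters, and since $\{1\}$ is itself a prime filter of $\mathbf{F}(n)$ that is a homomorphic retract onto $\alg 2$ is \emph{not} quite true — rather, the quotient of $\mathbf{F}(n)$ by the congruence collapsing everything below $\top$ is a Boolean algebra, and in fact $\mathbf{F}(n)$ maps onto $\Bnalg{2^n}$ by a map sending $1\mapsto e$; a valuation falsifying $\varphi$ in $\Bnalg{k}$ for $k\leq 2^n$ factors through this Boolean quotient, hence lifts to a valuation in $\mathbf{F}(n)$ falsifying $\varphi$, contradiction. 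For $k>2^n$ one instead notes that $\Bnalg{k}$ having $\varphi$ fail forces, by Lemma~\ref{lem:charact-Bn}-style reasoning, a failure already in some $\Bnalg{k'}$ with $k'\leq 2^n$, because the premises $t_i=1$ cannot all survive when the valuation spreads across more than $2^n$ atoms without the conclusion $s=1$ becoming forced as well — this is where I expect the bulk of the combinatorial work, and where the hypothesis $|X|\geq 2$ enters, paralleling the proof of Lemma~\ref{lem:free-qb3}.
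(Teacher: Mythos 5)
Your overall strategy --- reduce a failure of $\varphi$ in $\var{Pa}$ to a failure in a finite subdirectly irreducible, then pull the falsifying valuation back along the canonical surjection from $\alg{F}(n)$ --- is indeed the paper's strategy, but the step that carries all the weight is missing. To lift a falsifying tuple $\overline{a}$ from $\Bnalg{2^n}$ to $\alg{F}(n)$ you must explain why the premises $t_i=1$ survive the lift; what is needed is that the canonical surjection $h\colon\alg{F}(n)\to\Bnalg{2^n}$ satisfies $h^{-1}(\{1^{\Bnalg{2^n}}\})=\{1^{\alg{F}(n)}\}$, for only then does $h(t_i(\overline{p}))=1$ force $t_i(\overline{p})=1$ while $s(\overline{p})\neq 1$ persists. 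This is exactly what the paper extracts from Lemma~\ref{lem:under-each}(1) and (3): $\delta(h)$ is injective, it restricts to a bijection on maximal elements because both posets have exactly $2^n$ of them, and the element of $\delta(\alg{F}(n))$ lying below all maximal elements is unique and is the bottom, so $\delta(h)$ must send the bottom of $\delta(\Bnalg{2^n})$ (the prime filter $\{1\}$) to the bottom of $\delta(\alg{F}(n))$. You cite Lemma~\ref{lem:under-each}(4) and gesture at ``rigidity near the top'', but never state or use this fact; worse, describing the map as ``sending $1\mapsto e$'' and ``factoring through the Boolean quotient'' is incompatible with it, since any p-algebra homomorphism onto $\Bnalg{2^n}$ sends $1$ to $1$ and the whole point is that \emph{nothing else} is sent to $1$. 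As written, ``hence lifts to a valuation in $\alg{F}(n)$ falsifying $\varphi$'' is an unsupported assertion. (Also, under the paper's conventions the prime filter $\{1\}$ is the \emph{bottom} of $\delta(\alg{F}(n))$, not a maximal point $\top$ with $\up{\top}=\{\top\}$.)

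The case split on $k$ versus $2^n$, and the ``combinatorial work'' you anticipate for $k>2^n$, are unnecessary and would not go through as described. The reduction is soft: if $\var{Pa}\not\models\varphi$, the witnessing tuple generates an $n$-generated subalgebra in which $\varphi$ fails; because every equation in $\varphi$ has the form $r=1$, the failure projects onto some subdirect factor, which is an $n$-generated subdirectly irreducible and hence embeds in $\Bnalg{2^n}$; and a quasiequation failing in a subalgebra fails in the ambient algebra, so $\Bnalg{2^n}\not\models\varphi$. No analysis of valuations ``spreading across atoms'' is needed, and the hypothesis $|X|\geq 2$ you invoke plays no role in this theorem. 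Your detour through Lemma~\ref{lem:fin-posets} and surjective pp-morphisms from disjoint unions of copies of $\delta(\alg{F}(n))$ is likewise a red herring: that would amount to proving $\Bnalg{k}\in Q(\alg{F}(n))$, i.e.\ structural completeness, which is false by Corollary~\ref{cor:str-incompl}.
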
  

\begin{proof}
Suppose $\var{Pa}\not\models\varphi$. Then, some $n$-generated p-algebra
falsifies $\varphi$ and therefore $\Bnalg{2^n}\not\models\varphi$, since
$\Bnalg{2^n}$ is the largest $n$-generated subdirectly irreducible p-algebra. 
Let $\overline{a}$ be an $n$-tuple from $\Bnalg{2^n}$ such that
$t_i(\overline{a}) = 1$ for each $i\in\{1,\dots,k\}$ and
$s(\overline{a}) \neq 1$. Let $h\colon\alg{F}(n)\to \Bnalg{2^n}$ be the natural
homomorphism mapping generators to generators. The dual map
$\delta(h)\colon \delta(\Bnalg{2^n}) \to \delta(\alg{F}(n))$
is injective and maps maximal elements to maximal elements.
By Lemma~\ref{lem:under-each}(1), we have
$|\max(\delta(\alg{F}(n)))| = 2^n = |\max(\delta(\Bnalg{2^n}))|$, so 
$\delta(h)|_{\max(\delta(\Bnalg{2^n}))}$ is a bijection between
$\max(\delta(\Bnalg{2^n}))$ and $\max(\delta(\alg{F}(n)))$.
The bottom element of $\delta(\Bnalg{2^n})$ is $\{1^{\Bnalg{2n}}\}$, 
and by Lemma~\ref{lem:under-each}(3) we get that
$\delta(h)(\{1^{\Bnalg{2n}}\})$ is the bottom element of $\delta(\alg{F}(n))$.
By duality, it means that $h^{-1}(\{1^{\Bnalg{2n}}\}) = \{1^{\alg{F}(n)}\}$.
Since $h$ is surjective, for each $a_j\in\Bnalg{2^n}$ there is
a $p_j\in\alg{F}(n)$ such that $h(p_j) = a_j$. Therefore,
writing $\overline{p}$ for $(p_1,\dots,p_n)$, we obtain 
$t_i(\overline{p})\in h^{-1}(\{t_i(\overline{a})\})$.
But $h^{-1}(\{t_i(\overline{a})\}) = \left\{1^{\alg{F}(n)}\right\}$,
thus $t_i(\overline{p}) = 1^{\alg{F}(n)}$ for each $i$; yet
$s(\overline{p}) \neq 1^{\alg{F}(n)}$.
This however contradicts the
assumption $\alg{F}(n)\models\varphi$.
\end{proof}  

Note that Theorem~\ref{thm:special-str-compl} follows easily from
Theorem~\ref{thm:mints}, since $\alg{F}(k)$ is a subreduct of
$\alg{H}(k)$ for any $k$. We will show that the converse also holds.

\begin{lemma}\label{lem:our-implies-mints}
Let $\varphi$ be a quasiequation of Theorems~\ref{thm:mints}
and~\ref{thm:special-str-compl}. If the property
\begin{equation}\label{eq:we}
\alg{F}(n)\models\varphi\quad \text{implies}\quad
\var{Pa}\models\varphi
\end{equation}
holds, then the property 
\begin{equation}\label{eq:mints}
\alg{H}(n)\models\varphi\quad \text{implies}\quad
\var{HA}\models\varphi
\end{equation}
holds as well.
\end{lemma}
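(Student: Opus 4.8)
The plan is to reduce the Heyting-algebra property \eqref{eq:mints} to the p-algebra property \eqref{eq:we}, using two facts: (a) $\alg{F}(n)$ embeds into the p-algebra reduct of $\alg{H}(n)$ — this is exactly the subreduct statement recorded just before the lemma — and (b) the p-algebra reduct of \emph{any} Heyting algebra is a p-algebra. Write $\flat$ for the operation of restricting a Heyting algebra to the signature $\{\wedge,\vee,{}^*,0,1\}$, where $x^*$ is interpreted as $x\to 0$. Since $\varphi$ is a sentence over this signature, $\alg{A}\models\varphi$ and $\alg{A}^\flat\models\varphi$ say the same thing, for every Heyting algebra $\alg{A}$.

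First I would check (b): for a Heyting algebra $\alg{A}$, the reduct $\alg{A}^\flat$ is a bounded distributive lattice, and $x\wedge y = 0 \iff x\leq y\to 0$ holds in $\alg{A}$, where $y\to 0$ is by definition $y^*$; this is precisely the defining condition for p-algebras, so $\alg{A}^\flat\in\var{Pa}$. Consequently $\var{Pa}\models\varphi$ will immediately give $\alg{A}\models\varphi$ for every $\alg{A}\in\var{HA}$, that is, $\var{HA}\models\varphi$.

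It therefore remains to derive $\var{Pa}\models\varphi$ from the hypotheses. Assume $\alg{H}(n)\models\varphi$, equivalently $\alg{H}(n)^\flat\models\varphi$. By (a), $\alg{F}(n)$ is isomorphic to a subalgebra of $\alg{H}(n)^\flat$, and quasiequations, being universal sentences, are preserved in subalgebras; hence $\alg{F}(n)\models\varphi$. Now invoking the assumed implication \eqref{eq:we} for this $\varphi$ yields $\var{Pa}\models\varphi$, and by the previous paragraph we conclude $\var{HA}\models\varphi$.

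No step here is a genuine obstacle: the argument is a two-step transfer, along the subreduct inclusion and along the reduct operation $\flat$. The only points needing (routine) care are using the subreduct inclusion in the correct direction — namely $\alg{F}(n)\hookrightarrow\alg{H}(n)^\flat$ and not the reverse — and recalling the standard fact that $x\to 0$ is the pseudocomplement in a Heyting algebra, which is what makes (b) true. We remark that the special syntactic shape of $\varphi$ plays no role in this implication; what is used is only that $\varphi$ is a quasiequation over the p-algebra signature. The shape matters solely because the companion property \eqref{eq:we} is known (Theorem~\ref{thm:special-str-compl}) precisely for quasiequations of that form.
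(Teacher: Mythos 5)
Your argument is correct and follows essentially the same route as the paper's proof: transfer $\varphi$ from $\alg{H}(n)$ to its p-algebra reduct, pull it back along the embedding $\alg{F}(n)\hookrightarrow\alg{H}(n)^r$ to get $\alg{F}(n)\models\varphi$, apply \eqref{eq:we}, and then push $\var{Pa}\models\varphi$ to all of $\var{HA}$ via the reduct operation. The only cosmetic differences are that you spell out the (correct) verification that Heyting reducts are p-algebras and the preservation of quasiequations under subalgebras, where the paper treats these as understood and phrases the final step as a contradiction.
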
  

\begin{proof}
Assume $\alg{H}(n)\models\varphi$. Then $\alg{H}(n)^r\models\varphi$, where
$\alg{H}(n)^r$ is the p-algebra reduct of $\alg{H}(n)$. Let
$f\colon \alg{F}(n)\to \alg{H}(n)^r$ be the natural p-algebra homomorphism
mapping generators to generators. Since $f$ is an injection, 
$\alg{F}(n)\models \varphi$ and so by~\eqref{eq:we} we have
$\var{Pa}\models\varphi$. Suppose for some $\alg{A}\in\var{HA}$ falsifies
$\varphi$, and let $\overline{a} = (a_1,\dots,a_n)$ be an $n$-tuple from $A$
witnessing it. Then, we have $t_i(\overline{a}) = 1$ for all $i\in\{1,\dots,k\}$ but
$s(\overline{a})\neq 1$. The same $n$-tuple witnesses
the failure of $\varphi$ on the p-algebra reduct $\alg{A}^r$ of
$\alg{A}$. But $\alg{A}^r\in\var{Pa}$ so $\var{Pa}\not\models\varphi$, yielding
a contradiction.
\end{proof}  

We finish off with some questions that seem interesting, at least to us. 
By Theorem~\ref{thm:free-algs} we know that all free p-algebras belong
to $\mathrm{Mod}(\mathbf{qb}_3$), but we know very little about
quasivarieties $Q(\alg{F}_m(X))$. Of course we have
$Q(\alg{F}_m(X))\subseteq \var{Pa}_m^-$, and for $m > 3$ the inclusion is
proper since $\Bnalg{3}\in\var{Pa}_m^-$ but
$\Bnalg{3}\notin Q(\alg{F}_m(X))$. 
We are nearly sure that it is also proper for $m = 3$.

\begin{question}  
Describe $Q(\alg{F}_m(X))$ for $m\geq 3$. Is 
$Q(\alg{F}_3(X))$ properly contained in $\var{Pa}_3^-$?
\end{question}

The quasivarieties $\mathrm{Mod}(\mathbf{qb}_m)$ also seem of interest. In particular
$\mathrm{Mod}(\mathbf{qb}_3)$, to which all free p-algebras belong. Clearly, we have
$H(\mathrm{Mod}(\mathbf{qb}_3)) = \var{Pa} = H(\{\alg{F}_m(X): m\in\mathbb{N}\})$,
so $V(\mathrm{Mod}(\mathbf{qb}_3)) = V(\{\alg{F}_m(X): m\in\mathbb{N}\})$.

\begin{question}
Does equality hold for quasivariety generation as well? In other words,
is\/ $\mathrm{Mod}(\mathbf{qb}_3)$ generated by free p-algebras?
\end{question}  

Another interesting question concerns the lower part of
$\mathrm{Mod}(\mathbf{qb}_3)$. We know that the interval
$[\var{Pa}_{-1},\var{Pa}_2]$ is a 4-element chain. Let $\mathbb{P}$ be any
of the posets below.
\begin{center}
\begin{tikzpicture}
\node[dot] (v0) at (0,0) {};
\node[dot] (v1) at (-0.5,0.5) {};
\node[dot] (v2) at (-1,1) {};
\node[dot] (v3) at (0,1) {};
\node[dot] (v4) at (1,1) {};
\path[draw,thick] (v0)--(v1) (v0)--(v4) (v1)--(v2) (v1)--(v3);
\end{tikzpicture}\hspace{1.5cm}
\begin{tikzpicture}
\node[dot] (v0) at (0,0) {};
\node[dot] (v1) at (-0.5,0.5) {};
\node[dot] (v2) at (-1,1) {};
\node[dot] (v3) at (0,1) {};
\node[dot] (v4) at (1,1) {};
\node[dot] (v5) at (0.5,0.5) {};
\path[draw,thick] (v0)--(v1) (v0)--(v5) (v1)--(v2) (v5)--(v3) (v1)--(v3) (v5)--(v4);
\end{tikzpicture}\hspace{1.5cm}
\begin{tikzpicture}
\node[dot] (v0) at (0,0) {};
\node[dot] (v1) at (-1,0.5) {};
\node[dot] (v2) at (0,0.5) {};
\node[dot] (v3) at (1,0.5) {};
\node[dot] (v4) at (-1,1) {};
\node[dot] (v5) at (0,1) {};
\node[dot] (v6) at (1,1) {};
\path[draw,thick] (v0)--(v1) (v0)--(v2) (v0)--(v3)
(v1)--(v4) (v1)--(v5) (v2)--(v4) (v2)--(v6) (v3)--(v5) (v3)--(v6);
\end{tikzpicture}
\end{center}
One can show that $Q(\delta(\mathbb{P}))$ generates a cover of $\var{Pa}_2$.

\begin{question}
Are there any other covers of $\var{Pa}_2$? More generally, what are the covers of\/
$\var{Pa}_m$? 
\end{question}

\bibliographystyle{plain}
\bibliography{quasivarieties}

\end{document}